\documentclass[11pt,a4paper,reqno]{amsart}

\usepackage[T1]{fontenc}
\usepackage[english]{babel}

\usepackage{mathtools}
\usepackage{amssymb}
\usepackage{libertinus}
\usepackage[cal=boondoxo,bb=ams]{mathalfa}
\usepackage{microtype}
\usepackage{xcolor}

\allowdisplaybreaks[2]
\usepackage[
a4paper,
left=30mm,
right=30mm,
top=27mm,
bottom=30mm,
headsep=8mm,
footskip=13mm,
heightrounded
]{geometry}

\usepackage{enumitem}
\usepackage{array,booktabs,tabularx}

\setlist[itemize]{
	leftmargin=2.1em,
	itemsep=0.25em,
	topsep=0.45em,
	parsep=0pt
}

\setlist[enumerate]{
	leftmargin=2.1em,
	itemsep=0.25em,
	topsep=0.45em,
	parsep=0pt
}

\numberwithin{equation}{section}

\theoremstyle{plain}
\newtheorem{theorem}{Theorem}[section]
\newtheorem{lemma}[theorem]{Lemma}
\newtheorem{prop}[theorem]{Proposition}
\newtheorem{coro}[theorem]{Corollary}

\theoremstyle{definition}
\newtheorem{defn}[theorem]{Definition}

\theoremstyle{remark}
\newtheorem{remark}[theorem]{Remark}
\newtheorem{exam}[theorem]{Example}

\makeatletter

\def\@settitle{%
	\begin{center}
		\vspace*{-0.4em}
		{\normalfont\bfseries
			\fontsize{17}{21}\selectfont
			\@title\par}
		\vspace{0.7em}
	\end{center}
}

\renewcommand\section{%
	\@startsection{section}{1}{\z@}%
	{1.5\baselineskip plus 0.2\baselineskip minus 0.1\baselineskip}%
	{0.65\baselineskip}%
	{\normalfont\Large\bfseries}%
}

\renewcommand\subsection{%
	\@startsection{subsection}{2}{\z@}%
	{1.15\baselineskip plus 0.2\baselineskip minus 0.1\baselineskip}%
	{0.45\baselineskip}%
	{\normalfont\large\bfseries}%
}

\renewcommand\subsubsection{%
	\@startsection{subsubsection}{3}{\z@}%
	{0.9\baselineskip plus 0.15\baselineskip minus 0.1\baselineskip}%
	{0.35\baselineskip}%
	{\normalfont\normalsize\bfseries}%
}

\makeatother

\usepackage{etoolbox}

\makeatletter

\patchcmd{\@setauthors}
{\centering\footnotesize}
{\centering\normalsize}
{}
{\PackageWarning{Time-dependent-template}
	{Failed to patch the author font size}}

\patchcmd{\@setauthors}
{\MakeUppercase{\authors}}
{\authors}
{}
{\PackageWarning{Time-dependent-template}
	{Failed to patch the author capitalization}}

\patchcmd{\maketitle}
{\uppercasenonmath\shorttitle}
{}
{}
{\PackageWarning{Time-dependent-template}
	{Failed to patch the running-title formatting}}

\patchcmd{\maketitle}
{\@nx\MakeUppercase{\the\toks@}}
{\the\toks@}
{}
{\PackageWarning{Time-dependent-template}
	{Failed to patch the running-author formatting}}

\makeatother

\usepackage{hyperref}

\hypersetup{
	hidelinks,
	pdfencoding=auto,
	pdftitle={Blow-up criteria and lifespan estimates for semilinear wave equations with time-dependent damping and mass},
	pdfauthor={Wenhui Chen and Mohamed Ali Hamza},
	pdfsubject={Blow-up and lifespan estimates for semilinear wave equations with time-dependent lower-order terms},
	pdfkeywords={semilinear wave equation, time-dependent damping, time-dependent mass, derivative nonlinearity, blow-up, lifespan, positive adjoint solution, Volterra equation}
}

\usepackage[nameinlink,capitalise,noabbrev]{cleveref}

\crefname{theorem}{Theorem}{Theorems}
\crefname{lemma}{Lemma}{Lemmas}
\crefname{prop}{Proposition}{Propositions}
\crefname{coro}{Corollary}{Corollaries}
\crefname{defn}{Definition}{Definitions}
\crefname{remark}{Remark}{Remarks}
\crefname{exam}{Example}{Examples}
\crefname{section}{Section}{Sections}
\crefname{subsection}{Subsection}{Subsections}

\newtheorem{assumption}{Assumption}[section]
\newcommand{\ml}{\mathcal}
\newcommand{\mb}{\mathbb}
\newcommand{\dd}{\,\mathrm{d}}
\newcommand{\Rplus}{\mb{R}_+}
\newcommand{\Rplusast}{\mb{R}_+^\ast}

\DeclareMathOperator{\supp}{supp}

\title[Wave equations with time-dependent damping and mass]
{Blow-up criteria and lifespan estimates for semilinear wave equations
with time-dependent damping and mass}

\author[W. Chen and M. A. Hamza]{Wenhui Chen and Mohamed Ali Hamza}

\address{Wenhui Chen: School of Mathematics and Information Science,
Guangzhou University, Guangzhou 510006, P. R. China}
\email{wenhui.chen.math@gmail.com}

\address{Mohamed Ali Hamza: Department of Basic Sciences,
Deanship of Preparatory Year and Supporting Studies,
Imam Abdulrahman Bin Faisal University,
P. O. Box 1982, Dammam, Saudi Arabia}
\email{mahamza@iau.edu.sa}

\keywords{
	semilinear wave equation,
	time-dependent damping,
	time-dependent mass,
	derivative-type nonlinearity,
	blow-up,
	lifespan estimate}

\subjclass[2020]{
	Primary 35L71;
	Secondary 35B44, 35L05}

\date{}

\begin{document}

\begin{abstract}
	We consider semilinear wave equations with time-dependent damping and mass and a derivative-type nonlinearity. By applying a Liouville transformation and solving a Volterra integral equation posed from infinity, we construct a positive exact solution to the adjoint equation without using an explicit representation of the linear propagator. This solution is used to derive a blow-up criterion for energy solutions with finite propagation, together with an upper bound for the lifespan. If the primitive of the damping coefficient grows at most logarithmically, we obtain the full shifted Glassey range, including the critical exponent, and the corresponding polynomial and exponential lifespan estimates. The result applies independently of the sign of the scale-invariant discriminant and therefore includes the mass-dominant regime. It also covers non-integrable oscillatory perturbations of the damping coefficient when their contributions are canceled by the corresponding mass terms.
\end{abstract}

\maketitle

\section{Introduction}\label{Section-Introduction}

We consider the following Cauchy problem for a semilinear wave equation with time-dependent damping and mass and a derivative-type nonlinearity:
\begin{align}\label{Main-Problem}
	\begin{cases}
		u_{tt}-\Delta u+b(t)u_t+m(t)u=|u_t|^p,&x\in\mb{R}^n,\ t\in\Rplusast,\\
		u(0,x)=\varepsilon u_0(x),\ \ u_t(0,x)=\varepsilon u_1(x),&x\in\mb{R}^n,
	\end{cases}
\end{align}
where $n\geqslant1$ denotes the space dimension, $p>1$, and $\varepsilon>0$ measures the size of the initial data. The functions $b=b(t)$ and $m=m(t)$ represent the time-dependent damping and mass coefficients, respectively. To formulate the assumptions on these coefficients and the blow-up criterion below, we introduce
\begin{align}\label{Definition-BV}
	B(t):=\int_0^t b(\tau)\,\mathrm{d}\tau\ \ \text{and}\ \ 	V(t):=m(t)-\frac12 b'(t)-\frac14[b(t)]^2.
\end{align}
\begin{assumption}[Coefficient assumptions]\label{Assumption-Coefficients}
	The damping and mass coefficients satisfy
	\begin{align*}
		b\in \mathcal{C}^1(\Rplus)\cap L^\infty(\Rplusast),	\ \ m\in \mathcal{C}(\Rplus),	\ \ V\in L^1(\Rplusast).
	\end{align*}
\end{assumption}

\noindent Under Assumption~\ref{Assumption-Coefficients}, we derive a blow-up criterion for \eqref{Main-Problem} and an upper bound for the lifespan.

\subsection{Wave equations with derivative-type nonlinearities}
Let us first recall the corresponding problem without lower-order terms,
\begin{align}\label{Classical-Derivative-Wave}
	u_{tt}-\Delta u=|u_t|^p.
\end{align}
For every real number $d>1$, we define the Glassey exponent by
\begin{align*}
	p_{\mathrm{Gla}}(d):=1+\frac{2}{d-1},
\end{align*}
and put $p_{\mathrm{Gla}}(1):=\infty$. For the classical problem \eqref{Classical-Derivative-Wave} in $\mb{R}^n$, finite-time blow-up in the range $1<p\leqslant p_{\mathrm{Gla}}(n)$ under suitable positivity assumptions on the initial data, as well as complementary small data global in-time existence results in the supercritical range, have been established in a series of works (see \cite{Glassey=1981,Sideris=1983,Rammaha=1987,Hidano-Tsutaya=1995,Zhou=2001,Hidano-Wang-Yokoyama=2012,Ikeda-Sobajima-Wakasa=2019} and references therein). The corresponding upper bounds for the lifespan are
\begin{align*}
	T_\varepsilon\leqslant
	\begin{cases}
		C\varepsilon^{-\frac{2(p-1)}{2-(n-1)(p-1)}}&\text{if}\ \ 1<p<p_{\mathrm{Gla}}(n),\\
		\exp\big(C\varepsilon^{-(p-1)}\big)&\text{if}\ \ p=p_{\mathrm{Gla}}(n).
	\end{cases}
\end{align*}

The presence of a time-dependent damping term may or may not change this picture, depending on its long-time behavior. At the linear level, a systematic distinction between non-effective and effective time-dependent dissipation was developed in \cite{Wirth=2006,Wirth=2007}. Roughly speaking, dissipation below the scaling level preserves a wave-like asymptotic structure, whereas dissipation above the scaling level produces a diffusive behavior.

 In the massless case $m\equiv0$ but $b(t)\not\equiv0$ in \eqref{Main-Problem}, the authors of \cite{Lai-Takamura=2019} considered nonnegative weak damping coefficients. In particular, when $b\in L^1(\Rplusast)$, they recovered the classical Glassey blow-up range and the same lifespan upper bounds as for the undamped equation. Thus, an integrable damping coefficient remains short-range from the viewpoint of the blow-up threshold generated by the derivative-type nonlinearity.

A different behavior occurs for the scale-invariant damping
\begin{align*}
	b(t)=\frac{\mu}{1+t}\ \ \mbox{and}\ \ m(t)\equiv0.
\end{align*}
The corresponding linear equation was studied in \cite{Wirth=2004}, where an explicit representation in terms of special functions and some estimates were established. In particular, the analysis shows that the long-time
behavior depends essentially on the size of $\mu$. The first direct blow-up and lifespan estimates for the equation with the nonlinearity $|u_t|^p$ were obtained by \cite{Lai-Takamura=2019}. Their result covers the range $1<p\leqslant p_{\mathrm{Gla}}(n+2\mu)$. By specializing their more general damping-mass model to the massless case, the paper \cite{Palmieri-Tu=2021} subsequently enlarged this range for a part of the parameter regime (see the description later). The full shifted Glassey range was extended in \cite{Hamouda-Hamza-Improvement=2021} to $1<p\leqslant p_{\mathrm{Gla}}(n+\mu)$, together with the corresponding polynomial and exponential lifespan upper bounds. This indicates that the scale-invariant damping shifts the effective dimension from $n$ to $n+\mu$. On the global in-time existence side, the recent paper \cite{Fino-Hamza=2026} proved, in particular, small data global in-time existence in one space dimension for $p>p_{\mathrm{Gla}}(1+\mu)$ when $0<\mu\leqslant2$. Combined with the known blow-up result, this identifies $p_{\mathrm{Gla}}(1+\mu)$ as the critical exponent in that parameter range.

\subsection{Scale-invariant damping and mass}

The addition of a scale-invariant mass leads to the model
\begin{align}\label{Introduction-Scale-Invariant-Problem}
	u_{tt}-\Delta u+\frac{\mu}{1+t}u_t+\frac{\nu^2}{(1+t)^2}u=|u_t|^p,
\end{align}
where $\mu\geqslant0$ and $\nu^2\geqslant0$. A central quantity in the analysis of the associated linear equation is the discriminant
\begin{align*}
	\delta:=(\mu-1)^2-4\nu^2.
\end{align*}
The temporal components in the explicit representation of the linear problem are described by modified Bessel functions whose order depends on $\sqrt{\delta}$ (see, for example,
\cite{Wirth=2004,Palmieri-Integral=2021}). Accordingly, the regime $\delta\geqslant0$ is associated with a real Bessel order and will be referred to as the dissipation-dominant regime, whereas $\delta<0$ corresponds to the
mass-dominant regime.

For derivative-type nonlinearities, the authors of \cite{Palmieri-Tu=2021} proved finite-time blow-up under the condition $\delta\geqslant0$. More precisely, they obtained the range
\begin{align}\label{Previous-Palmieri-Tu-Range}
	1<p\leqslant p_{\mathrm{Gla}}(n+\sigma)\ \ \mbox{with}\ \ 
	\sigma:=
	\begin{cases}
		\mu+1-\sqrt{\delta}&\text{if}\ \  0\leqslant\delta<1,\\
		\mu&\text{if}\ \  \delta\geqslant1.
	\end{cases}
\end{align}
Their proof relies on an explicit integral representation of the scale-invariant linear equation and the positivity of the corresponding kernels. In the case $0\leqslant\delta<1$, the argument also requires
the vanishing of the initial displacement. The range in \eqref{Previous-Palmieri-Tu-Range} was subsequently improved in \cite{Hamouda-Hamza=2022}. More precisely, for every nonnegative discriminant, the full range $1<p\leqslant p_{\mathrm{Gla}}(n+\mu)$ was obtained, together with the expected subcritical lifespan upper bound.

The mass-dominant regime is more delicate because $\sqrt{\delta}$ is purely imaginary and the positivity mechanism associated with the real-order Bessel representation is no longer directly available. The
strictly subcritical range was recently treated in the manuscript \cite{Hamza=2026}. More precisely, for
$1<p<p_{\mathrm{Gla}}(n+\mu)$, finite-time blow-up was established with the lifespan estimate
\begin{align}\label{Previous-Subcritical-Loss}
	T_\varepsilon \leqslant C_d\,\varepsilon^{-\frac{2(p-1)}{2-(n+d-1)(p-1)}}\  \ \mbox{for}\ \ \mu<d<\frac{p+1}{p-1}-n.
\end{align}
Since $d$ must be strictly larger than $\mu$, the estimate \eqref{Previous-Subcritical-Loss} contains a loss with respect to the
expected lifespan governed by the effective dimension $n+\mu$. The critical exponent in the mass-dominant regime was considered in the subsequent work \cite{Hamza=2026-2}. By introducing a logarithmic correction into a family of positive test functions, finite-time blow-up was obtained at $p=p_{\mathrm{Gla}}(n+\mu)$. The resulting critical lifespan estimate, however, is of the form
\begin{align}\label{Previous-Critical-Loss}
	T_\varepsilon\leqslant \exp\big(C\varepsilon^{-(p-1+d)}\big)\ \ \text{for every}\ \  d>0,
\end{align}
and therefore still contains an arbitrarily small loss due to $d>0$. Moreover, both \cite{Hamza=2026,Hamza=2026-2} are restricted to the exact scale-invariant coefficients in \eqref{Introduction-Scale-Invariant-Problem}. 

Consequently, a general approach that treats time-dependent damping and mass coefficients, includes the entire mass-dominant regime, and yields the subcritical and critical lifespan estimates without loss has remained unavailable.

\subsection{Main contributions}
For general time-dependent damping and mass coefficients, an explicit positive solution to the adjoint equation is not available. Even for the scale-invariant model, the positivity argument based on the Bessel-function representation depends on the sign of the discriminant and does not apply to the mass-dominant regime. To avoid this restriction, we apply a Liouville transformation and reduce the temporal adjoint equation to an equation involving the effective potential $V(t)$ defined in \eqref{Definition-BV}. If $V\in L^1(\Rplusast)$, the corresponding Volterra integral equation posed from infinity admits a positive solution for a sufficiently large spectral parameter. The resulting exact adjoint solution requires neither an explicit representation of the linear propagator nor a sign condition on the scale-invariant discriminant. Using this solution and two weighted functionals, we derive a divergence condition for finite-time blow-up and an inverse-function upper bound for the lifespan. When the primitive $B(t)$ grows at most logarithmically, the full shifted Glassey range, including the critical exponent, follows with the corresponding lifespan estimates and without loss in the effective dimension. This also covers the mass-dominant scale-invariant case and admissible oscillatory or nondecaying perturbations of the coefficients.

\medskip
\paragraph{Notation.}
We write $\Rplus:=[0,\infty)$ and $\Rplusast:=(0,\infty)$. For $R>0$, we denote by $B_R$ the open ball in $\mb{R}^n$ centered at the origin with radius $R$, and by $\overline{B}_R$ its closure. Unless otherwise specified, all Lebesgue and Sobolev spaces are defined over $\mb{R}^n$. In particular, we write $L^q:=L^q(\mb{R}^n)$ and $H^1:=H^1(\mb{R}^n)$. We denote by $\ml{C}_b(\Rplus)$ the space of bounded continuous functions on $\Rplus$, and by $\supp f$ the support of a function $f$. For $p>1$, we write $p':=\frac{p}{p-1}$ for its H\"older conjugate. For nonnegative quantities $f$ and $g$, the notation $f\lesssim g$ means that $f\leqslant Cg$ for some constant $C>0$, while $f\approx g$ means that both $f\lesssim g$ and $g\lesssim f$ hold. Subscripts attached to $\lesssim$ indicate the permitted dependence of the implicit constant. The symbol $C>0$ denotes a generic constant that may change from line to line and may depend on fixed parameters and the initial data, but is independent of $t$ and $\varepsilon$.

\section{Main results}\label{Section-Main}
\subsection{Energy solutions and blow-up results}

We formulate the solution concept in the natural energy space. The support condition included below is the finite propagation property required in the test function argument.
\begin{defn}[Local in-time energy solution]\label{Def-Energy-Solution}
	Let $T>0$. Let $(u_0,u_1)\in H^1\times L^2$, and assume that $\operatorname{supp}u_0\cup\operatorname{supp}u_1\subset B_R$ for some $R>0$. A function
	\begin{align*}
		u\in \mathcal{C}\big([0,T),H^1\big)\cap \mathcal{C}^1\big([0,T),L^2\big)\ \ \text{satisfying}\ \ u_t\in L^p_{\mathrm{loc}}\big([0,T)\times\mb{R}^n\big),
	\end{align*}
	is called a local in-time energy solution to \eqref{Main-Problem} on $[0,T)$ if
	\begin{align}\label{Support-Condition}
		\operatorname{supp}u(t,\cdot)\subset B_{R+t}\ \ \text{for every}\ \ t\in[0,T),
	\end{align}
	the initial conditions in \eqref{Main-Problem} hold in $H^1\times L^2$, and
	\begin{align}\label{Weak-Formulation}
		&\int_{\mb{R}^n}u_t(t,x)\Phi(t,x)\,\mathrm{d}x-\varepsilon\int_{\mb{R}^n}u_1(x)\Phi(0,x)\,\mathrm{d}x-\int_0^t\int_{\mb{R}^n}u_t(s,x)\Phi_s(s,x)\,\mathrm{d}x\,\mathrm{d}s\notag\\
		&\quad+\int_0^t\int_{\mb{R}^n}\nabla u(s,x)\cdot\nabla\Phi(s,x)\,\mathrm{d}x\,\mathrm{d}s+\int_0^t\int_{\mb{R}^n}\big(b(s)u_t(s,x)+m(s)u(s,x)\big)\Phi(s,x)\,\mathrm{d}x\,\mathrm{d}s\notag\\
		&=\int_0^t\int_{\mb{R}^n}|u_t(s,x)|^p\Phi(s,x)\,\mathrm{d}x\,\mathrm{d}s
 	\end{align}
	for every $t\in[0,T)$ and every $\Phi\in\mathcal{C}_0^\infty\big([0,T)\times\mb{R}^n\big)$.
\end{defn}

\begin{remark}\label{Rem-Support-Time-Derivative}
	The support condition \eqref{Support-Condition} also implies
	\begin{align}\label{Support-Time-Derivative}
		\operatorname{supp}u_t(t,\cdot) \subset \overline{B}_{R+t}\ \ \text{for every}\ \ t\in[0,T).
	\end{align}
    Indeed, this follows by testing $u(s,\cdot)$ against any function compactly supported outside $\overline{B}_{R+t}$ and differentiating with respect to $s$ at $s=t$.
\end{remark}
For the local in-time energy solution under consideration, we denote by $T_\varepsilon\in(0,\infty]$ its maximal continuation time within the class of Definition~\ref{Def-Energy-Solution}, and call $T_\varepsilon$ its lifespan.

Integrating by parts in time in the terms involving $u_t$ and in space in the term involving $\nabla u$, we obtain the following equivalent formulation associated with the formal adjoint operator.
\begin{lemma}[Adjoint weak formulation]\label{Lemma-Adjoint-Weak}
	Let $u$ be an energy solution in the sense of Definition~\ref{Def-Energy-Solution}. The following identity holds:
	\begin{align*}
		&\int_{\mb{R}^n}\big[u_t(t,x)\Phi(t,x)-u(t,x)\Phi_t(t,x)+b(t)u(t,x)\Phi(t,x)\big]\,\mathrm{d}x\\
		&\quad+\int_0^t\int_{\mb{R}^n}u(s,x)\big[\Phi_{ss}(s,x)-\Delta\Phi(s,x)-\partial_s\big(b(s)\Phi(s,x)\big)+m(s)\Phi(s,x)\big]\,\mathrm{d}x\,\mathrm{d}s\\
		&=\int_0^t\int_{\mb{R}^n}|u_t(s,x)|^p\Phi(s,x)\,\mathrm{d}x\,\mathrm{d}s+\varepsilon\int_{\mb{R}^n}\big[\big(u_1(x)+b(0)u_0(x)\big)\Phi(0,x)-u_0(x)\Phi_t(0,x)\big]\,\mathrm{d}x
	\end{align*}
	for every $t\in[0,T)$ and every $\Phi\in\mathcal{C}_0^\infty\big([0,T)\times \mb{R}^n\big)$.
\end{lemma}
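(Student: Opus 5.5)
Starting from the weak formulation \eqref{Weak-Formulation} with a fixed $\Phi\in\mathcal{C}_0^\infty([0,T)\times\mb{R}^n)$, we move all time derivatives off $u_t$ and the spatial gradient off $\nabla u$. The regularity $u\in\mathcal{C}([0,T),H^1)\cap\mathcal{C}^1([0,T),L^2)$ makes $s\mapsto\int u(s,x)\Psi(s,x)\,\mathrm{d}x$ a $\mathcal{C}^1$ function for any smooth compactly supported $\Psi$. Its derivative is $\int(u_t\Psi+u\Psi_s)\,\mathrm{d}x$, by the product rule in $L^2$. This is the only analytic input needed, and it justifies every integration by parts in time below.

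The terms are handled one at a time. First, $-\int_0^t\int u_t\Phi_s$: apply the identity with $\Psi=\Phi_s$ to get
\[
-\int_0^t\int u_t\Phi_s=-\Big[\int u\Phi_s\,\mathrm{d}x\Big]_0^t+\int_0^t\int u\Phi_{ss}.
\]
Next, $\int_0^t\int b\,u_t\Phi$: apply it with $\Psi=b\Phi$, which is $\mathcal{C}^1$ in time since $b\in\mathcal{C}^1$. This gives
\[
\int_0^t\int b\,u_t\Phi=\Big[b\int u\Phi\,\mathrm{d}x\Big]_0^t-\int_0^t\int u\,\partial_s(b\Phi).
\]
A $\mathcal{C}^1$ test weight is enough here, so the lack of smoothness of $b\Phi$ causes no trouble. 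Finally, $\int\nabla u\cdot\nabla\Phi=-\int u\Delta\Phi$ for each $s$, since $u(s)\in H^1$ and $\Phi(s)$ is smooth with compact support.

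Substituting these into \eqref{Weak-Formulation} and collecting terms gives the double integral of $u[\Phi_{ss}-\Delta\Phi-\partial_s(b\Phi)+m\Phi]$. The boundary terms at time $t$ become $\int[u_t\Phi-u\Phi_t+b u\Phi](t)$. The boundary terms at time $0$, together with $-\varepsilon\int u_1\Phi(0)$, become $-\varepsilon\int[(u_1+b(0)u_0)\Phi(0)-u_0\Phi_t(0)]$, using $u(0)=\varepsilon u_0$. Moving this to the right-hand side yields exactly the stated identity.

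The only point needing care is the time-differentiation identity for $\int u\Psi$. It holds because $u\in\mathcal{C}^1([0,T),L^2)$ and $\Psi,\Psi_s$ are continuous in $L^2$ with uniformly compact support. There is no real obstacle; the support condition \eqref{Support-Condition} is not even needed here, because $\Phi$ is compactly supported.
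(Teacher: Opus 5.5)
Your proposal is correct and takes the same route as the paper, which obtains the lemma from \eqref{Weak-Formulation} by integrating by parts in time in the two $u_t$-terms and in space in the $\nabla u$-term. Your product rule for $s\mapsto\int u\Psi\,\mathrm{d}x$, using $u\in\mathcal{C}^1([0,T),L^2)$, supplies the justification the paper leaves implicit, and your signs for the boundary terms at $s=0$ and $s=t$ are right.
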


The general blow-up criterion and the corresponding lifespan estimate are formulated in terms of the following time integral.
\begin{defn}[Glassey-type time integral]\label{Def-Glassey-Time-Integral}
	For $p>1$, the function $\mathcal{H}_p:[1,\infty)\to\Rplus$ defined via
	\begin{align*}
		\mathcal{H}_p(t):=\int_1^t\mathrm{e}^{-\frac{p-1}{2}B(s)}(1+s)^{-\frac{p-1}{p_{\mathrm{Gla}}(n)-1}}\,\mathrm{d}s \ \ \text{for}\ \  t\geqslant1,
	\end{align*}
	is called the Glassey-type time integral associated with $B$. Here, the exponent of $(1+s)$ is understood as $0$ when $n=1$.
\end{defn}
Since the integrand in the definition of $\mathcal{H}_p$ is continuous and strictly positive, the function $\mathcal{H}_p$ is continuous and strictly increasing on $[1,\infty)$, with $\mathcal{H}_p(1)=0$. Especially, provided that
\begin{align*}
	\lim_{t\to\infty}\mathcal{H}_p(t)=\infty,
\end{align*}
then $\mathcal{H}_p$ is a bijection from $[1,\infty)$ onto $\Rplus$, and its inverse $\mathcal{H}_p^{-1}:\Rplus\to[1,\infty)$ is well defined, continuous, and strictly increasing.

With these preparations in place, we now state the main result of this manuscript.
\begin{theorem}[General blow-up criterion and lifespan estimate]\label{Thm-General-Criterion}
	Let $n\geqslant1$ and $p>1$, and let Assumption~\ref{Assumption-Coefficients} be satisfied. Suppose that $(u_0,u_1)\in H^1\times L^2$ are nonnegative and compactly supported, not both identically zero, with $\supp u_0\cup\supp u_1\subset B_R$ for some $R>0$. For $\varepsilon>0$, let $u$ be a local in-time energy solution to \eqref{Main-Problem} in the sense of Definition~\ref{Def-Energy-Solution}, with lifespan $T_\varepsilon$. If
\begin{align}\label{Divergence-Condition}
	\lim_{t\to\infty}\mathcal{H}_p(t)=\infty,
\end{align}
	then $T_\varepsilon<\infty$. More precisely,
	\begin{align}\label{General-Lifespan-Bound}
		T_\varepsilon\leqslant\mathcal{H}_p^{-1}\big(C\varepsilon^{-(p-1)}\big),
	\end{align}
	where $C=C(n,p,b,m,R,u_0,u_1)>0$ is independent of $\varepsilon$.
\end{theorem}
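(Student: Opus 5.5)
The plan is a test-function argument built on an exact positive solution of the adjoint equation, of separated form $\Psi(t,x)=\rho(t)\varphi(x)$. For the spatial part I take $\varphi(x):=\int_{\mb{S}^{n-1}}\mathrm{e}^{\lambda x\cdot\omega}\,\mathrm{d}\omega$ (with $\varphi(x)=\cosh(\lambda x)$ when $n=1$). It satisfies $\Delta\varphi=\lambda^2\varphi$, $\varphi>0$, and the classical bound $\varphi(x)\approx \langle\lambda|x|\rangle^{-\frac{n-1}{2}}\mathrm{e}^{\lambda|x|}$.

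\textbf{Step 1: the temporal adjoint solution.} The time part $\rho$ must solve
\begin{align*}
\rho''-(b\rho)'+m\rho=\lambda^2\rho .
\end{align*}
The Liouville substitution $\rho=\mathrm{e}^{B/2}y$ turns this, by a direct computation, into $y''=(\lambda^2-V)y$ with $V$ as in \eqref{Definition-BV}. I then seek $y=\mathrm{e}^{-\lambda t}(1+h)$, with $h$ solving the Volterra equation posed from infinity,
\begin{align*}
h(t)=\frac1{2\lambda}\int_t^\infty\big(1-\mathrm{e}^{-2\lambda(s-t)}\big)V(s)\big(1+h(s)\big)\,\mathrm{d}s .
\end{align*}
Since $V\in L^1$ by Assumption~\ref{Assumption-Coefficients}, the map is a contraction on $\ml{C}_b(\Rplus)$ once $\lambda\geqslant \|V\|_{L^1}$. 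This gives $|h|\leqslant\frac12$, and hence $\rho\approx\mathrm{e}^{B(t)/2-\lambda t}$. Differentiating the integral equation yields $|h'|\lesssim\|V\|_{L^1}$, so $\rho'=(\frac b2-\lambda+O(1))\rho$. Enlarging $\lambda$ if necessary (using $b\in L^\infty$), this makes $-\rho'+b\rho\approx\rho$ with a positive sign. This $\lambda$ is fixed once and for all.

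\textbf{Step 2: plug $\Psi$ into \cref{Lemma-Adjoint-Weak}.} The adjoint bracket vanishes identically. This is legitimate after multiplying $\Psi$ by a spatial cutoff equal to $1$ on $B_{R+t}$, thanks to \eqref{Support-Condition} and \eqref{Support-Time-Derivative}. With $F_0(t):=\int u\Psi\,\mathrm{d}x$, $F_1(t):=\int u_t\Psi\,\mathrm{d}x$ and $G(t):=\int_0^t\!\int|u_t|^p\Psi\,\mathrm{d}x\,\mathrm{d}s$, this gives
\begin{align*}
F_1+(-\rho'/\rho+b)F_0=\varepsilon I_0+G(t), \qquad F_0'=F_1+(\rho'/\rho)F_0 .
\end{align*}
Here $I_0>0$ because the data are nonnegative and not both zero, and $-\rho'(0)+b(0)\rho(0)>0$. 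Eliminating $F_1$ produces a first-order linear inequality for $F_0$ with the damping-type coefficient $\approx\lambda$. Integrating it, and using that $\varepsilon I_0+G$ is nondecreasing, should give $F_0\lesssim \varepsilon I_0+G$, so that $F_0$ is controlled by the right-hand side. Feeding this back into the first identity, with the coefficient of $F_0$ bounded, I aim for
\begin{align*}
F_1(t)\geqslant c\big(\varepsilon I_0+G(t)\big).
\end{align*}
I expect this step to be the main obstacle. The coefficient $-\rho'/\rho+b$ is only asymptotically $\lambda-\frac b2$, so the constants must be handled carefully. I would try to absorb them by choosing $\lambda$ large relative to $\|b\|_\infty$ and $\|V\|_{L^1}$. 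If the clean absorption fails, the fallback is a second weighted functional, $F_1+\kappa F_0$ for a suitable $\kappa$.

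\textbf{Step 3: Hölder and the ODE.} On the support $B_{R+t}$, Hölder's inequality gives
\begin{align*}
G'(t)\geqslant |F_1(t)|^p\Big(\int_{B_{R+t}}\Psi\,\mathrm{d}x\Big)^{-(p-1)} .
\end{align*}
The standard estimate $\int_{B_{R+t}}\varphi\,\mathrm{d}x\lesssim(1+t)^{\frac{n-1}2}\mathrm{e}^{\lambda t}$ combined with $\rho\approx\mathrm{e}^{B/2-\lambda t}$ shows $\int_{B_{R+t}}\Psi\,\mathrm{d}x\lesssim\mathrm{e}^{B(t)/2}(1+t)^{\frac{n-1}2}$. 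Since $\frac{(n-1)(p-1)}{2}=\frac{p-1}{p_{\mathrm{Gla}}(n)-1}$, setting $L:=\varepsilon I_0+G$ yields
\begin{align*}
L'(t)\geqslant C\,\mathrm{e}^{-\frac{p-1}2B(t)}(1+t)^{-\frac{p-1}{p_{\mathrm{Gla}}(n)-1}}L(t)^p .
\end{align*}
Dividing by $L^p$ and integrating from $1$ to $t$, with $L(1)\geqslant\varepsilon I_0$, gives $C\mathcal{H}_p(t)\leqslant\frac{1}{p-1}(\varepsilon I_0)^{-(p-1)}$ for all $t<T_\varepsilon$. Under \eqref{Divergence-Condition}, inverting the strictly increasing function $\mathcal{H}_p$ gives \eqref{General-Lifespan-Bound}, and in particular $T_\varepsilon<\infty$. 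If $T_\varepsilon\leqslant1$ there is nothing to prove, since $\mathcal{H}_p^{-1}\geqslant1$.
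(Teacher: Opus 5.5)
Your plan is correct and is essentially the paper's proof. It uses the same Liouville reduction and the same Volterra equation posed from infinity, though the sign should be $h(t)=-\frac{1}{2\lambda}\int_t^\infty(1-\mathrm{e}^{-2\lambda(s-t)})V(s)(1+h(s))\dd s$; it also uses the same pair of functionals linked by an exact identity and a first-order linear ODE, and the same H\"older/ODE integration starting from $t=1$.

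The step you flagged closes without a fallback: with $r=\rho'/\rho$, the identity coefficient is $\beta=b-r\approx\lambda+\frac b2$ while the ODE coefficient for $F_0$ is $\gamma=b-2r\approx2\lambda$, so $\lambda\geqslant\max\{2,16\|V\|_{L^1(\Rplusast)},8\|b\|_{L^\infty(\Rplusast)}\}$ gives $\sup\beta/\inf\gamma\leqslant\frac{19}{28}$, and together with $F_0(0)\leqslant\varepsilon I_0/\beta(0)$ this yields $F_1\geqslant\frac14(\varepsilon I_0+G)$ for $t\geqslant1$ only (it fails near $t=0$ when $u_1\equiv0$, which is why the integration starts at $t=1$).
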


The next result derives an explicit Glassey-type range from a logarithmic upper bound for $B$. No lower bound for $B$ is required.

\begin{coro}[Glassey-type range under logarithmic growth of $B$]\label{Coro-Log-Damping}
	Let $n\geqslant1$ and $p>1$, and let Assumption~\ref{Assumption-Coefficients} be satisfied. Suppose that $(u_0,u_1)\in H^1\times L^2$ satisfy the sign, support, and nontriviality assumptions of Theorem~\ref{Thm-General-Criterion}. For $\varepsilon>0$, let $u$ be a local in-time energy solution to \eqref{Main-Problem} in the sense of Definition~\ref{Def-Energy-Solution}, with lifespan $T_\varepsilon$. Assume, in addition, that there exist constants $\mu\geqslant0$ and $C_B\geqslant0$ such that
	\begin{align}\label{Log-Damping-Assumption}
		B(t)\leqslant\mu\log(1+t)+C_B\ \ \text{for every}\ \ t\geqslant0.
	\end{align}
	\begin{enumerate}[label=(\roman*)]
		\item If $n+\mu>1$ and $1<p\leqslant p_{\mathrm{Gla}}(n+\mu)$, then $T_\varepsilon<\infty$ for every $\varepsilon>0$. Moreover, for every sufficiently small $\varepsilon>0$, one has
		\begin{align}\label{Explicit-Lifespan-Bounds}
			T_\varepsilon\leqslant
			\begin{cases}
				C\varepsilon^{-\frac{2(p-1)}{2-(n+\mu-1)(p-1)}}&\text{if}\ \ 1<p<p_{\mathrm{Gla}}(n+\mu),\\
				\exp\big(C\varepsilon^{-(p-1)}\big)&\text{if}\ \ p=p_{\mathrm{Gla}}(n+\mu).
			\end{cases}
		\end{align}
		\item If $n=1$ and $\mu=0$, then $T_\varepsilon<\infty$ for every $p>1$  and every $\varepsilon>0$. Moreover, for every sufficiently small $\varepsilon>0$, one has
		\begin{align}\label{One-Dimensional-Zero-Shift}
			T_\varepsilon\leqslant C\varepsilon^{-(p-1)}.
		\end{align}
	\end{enumerate}
	Here, the positive constant $C$ is independent of $\varepsilon$.
\end{coro}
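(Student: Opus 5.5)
The plan is to deduce the corollary directly from Theorem~\ref{Thm-General-Criterion}. Under \eqref{Log-Damping-Assumption} I bound $\mathcal{H}_p$ from below by an explicit function, check \eqref{Divergence-Condition}, and invert. Since $p>1$, the assumption gives $\mathrm{e}^{-\frac{p-1}{2}B(s)}\geqslant \mathrm{e}^{-\frac{p-1}{2}C_B}(1+s)^{-\frac{(p-1)\mu}{2}}$. No lower bound on $B$ is needed, because only an upper bound on $B$ enters this lower bound on the integrand. Writing $\frac{p-1}{p_{\mathrm{Gla}}(n)-1}=\frac{(n-1)(p-1)}{2}$ (which also covers $n=1$, where the exponent is $0$), we obtain for $t\geqslant1$
\begin{align*}
	\mathcal{H}_p(t)\geqslant c\int_1^t(1+s)^{-\kappa}\,\mathrm{d}s,\qquad \kappa:=\frac{(n+\mu-1)(p-1)}{2},\ \ c:=\mathrm{e}^{-\frac{p-1}{2}C_B}>0.
\end{align*}

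For (i), with $n+\mu>1$, the condition $p\leqslant p_{\mathrm{Gla}}(n+\mu)=1+\frac{2}{n+\mu-1}$ is equivalent to $\kappa\leqslant1$. If $\kappa<1$, the integral is at least $c'\big((1+t)^{1-\kappa}-2^{1-\kappa}\big)$, which diverges. If $\kappa=1$, it equals $c\log\frac{1+t}{2}$, which also diverges. So \eqref{Divergence-Condition} holds and $T_\varepsilon<\infty$ for every $\varepsilon>0$.

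For the lifespan, denote the explicit lower bound by $h(t)$; it is strictly increasing. Then $\mathcal{H}_p\geqslant h$ gives $\mathcal{H}_p^{-1}(y)\leqslant h^{-1}(y)$ for $y$ in the range of $h$: if $\mathcal{H}_p^{-1}(y)=\tau$, then $h(\tau)\leqslant \mathcal{H}_p(\tau)=y$, so $\tau\leqslant h^{-1}(y)$. Applying \eqref{General-Lifespan-Bound} with $y=C\varepsilon^{-(p-1)}$:
\begin{itemize}
	\item When $\kappa<1$, $(1+T_\varepsilon)^{1-\kappa}\leqslant C\varepsilon^{-(p-1)}+2^{1-\kappa}\lesssim\varepsilon^{-(p-1)}$ for small $\varepsilon$. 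Since $1-\kappa=\frac{2-(n+\mu-1)(p-1)}{2}$, this is the first bound in \eqref{Explicit-Lifespan-Bounds}.
	\item When $\kappa=1$, $T_\varepsilon\leqslant 2\exp\big(C\varepsilon^{-(p-1)}\big)$. The factor $2$ is absorbed by enlarging $C$ for small $\varepsilon$.
\end{itemize}

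For (ii), where $n=1$ and $\mu=0$, we have $\kappa=0$, so $\mathcal{H}_p(t)\geqslant c(t-1)$ diverges for every $p>1$. Inverting as above gives $T_\varepsilon\leqslant 1+C\varepsilon^{-(p-1)}\leqslant C'\varepsilon^{-(p-1)}$ for small $\varepsilon$, which is \eqref{One-Dimensional-Zero-Shift}.

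There is no real obstacle here. The only points requiring care are the monotone comparison of the inverses, the case $n=1$ exponent convention, and the absorption of additive constants into $C$ when $\varepsilon$ is small.
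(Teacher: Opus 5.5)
Your proposal is correct and follows the paper's proof essentially step by step. You use the upper bound on $B$ to get $\mathcal{H}_p(t)\geqslant c\int_1^t(1+s)^{-\kappa}\,\mathrm{d}s$, check divergence in the cases $\kappa<1$, $\kappa=1$, $\kappa=0$, and invert. Your comparison $\mathcal{H}_p^{-1}\leqslant h^{-1}$ is equivalent to the paper's letting $t\uparrow T_\varepsilon$ in $\mathcal{H}_p(t)\leqslant C\varepsilon^{-(p-1)}$, and your subcritical lower bound with $2^{1-\kappa}$ is even slightly more precise than the paper's $(1+t)^{1-\alpha}-1$.
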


\begin{remark}\label{Rem-Scattering-Case}
	Consider the massless case $m\equiv0$. If $b\geqslant0$ as well as $b\in L^1(\Rplusast)$, then $B(t)\leqslant\|b\|_{L^1(\Rplusast)}$ for every $t\geqslant0$. Hence, \eqref{Log-Damping-Assumption} is satisfied with $\mu=0$. If, in addition, $b$ satisfies the regularity requirements in Assumption~\ref{Assumption-Coefficients} and
	\begin{align*}
		V(t)\big|_{m(t)\equiv 0}=-\frac12b'(t)-\frac14[b(t)]^2\in L^1(\Rplus),
	\end{align*}
	then Corollary~\ref{Coro-Log-Damping} yields the classical Glassey range $1<p\leqslant p_{\mathrm{Gla}}(n)$ for $n\geqslant 2$, while every $p>1$ is covered when $n=1$. Moreover, the lifespan estimates \eqref{Explicit-Lifespan-Bounds} coincide with those obtained in \cite[Theorem~2.1 and Remark~2.2]{Lai-Takamura=2019} for nonnegative integrable damping coefficients.
	
	The coefficient assumptions in the two results are not directly comparable. The result in \cite{Lai-Takamura=2019} is formulated under the conditions $b\geqslant0$ and $b\in L^1(\Rplusast)$, whereas our argument requires the integrability of $-\frac12b'-\frac14b^2$ in the massless case and only an upper bound for $B$. The advantage of the present formulation becomes more apparent when $m$ is nonzero. Since the structural condition is imposed on the combined quantity
	\begin{align*}
		V(t)=m(t)-\frac12b'(t)-\frac14[b(t)]^2,
	\end{align*}
	the mass coefficient itself is not required to be integrable or to have a fixed sign, and cancellations between the damping and mass coefficients are allowed.
\end{remark}

Scale-invariant wave equations with a derivative-type nonlinearity, both in the massless case and in the presence of a scale-invariant mass, have been studied in \cite{Lai-Takamura=2019,Palmieri-Tu=2021,Fino-Hamza=2026,Hamza=2026,Hamza=2026-2} and references given therein. The following corollary specializes both the general nonexistence criterion and the lifespan estimates to the scale-invariant damping-mass model.

\begin{coro}[Scale-invariant damping and mass]
	\label{Coro-Scale-Invariant}
	Let $\mu\geqslant0$ and $\nu^2\geqslant0$, and define
	\begin{align}\label{Discriminant}
		\mb{R}\ni\delta:=(\mu-1)^2-4\nu^2.
	\end{align}
	Let us consider the following Cauchy problem:
	\begin{align}\label{Scale-Invariant-Problem}
		\begin{cases}
			u_{tt}-\Delta u+\dfrac{\mu}{1+t}u_t+\dfrac{\nu^2}{(1+t)^2}u=|u_t|^p,&x\in\mb{R}^n,\ t\in\Rplusast,\\
			u(0,x)=\varepsilon u_0(x),\ \ u_t(0,x)=\varepsilon u_1(x),&x\in\mb{R}^n.
		\end{cases}
	\end{align}
	Suppose that $(u_0,u_1)\in H^1\times L^2$ are nonnegative and compactly supported, not both identically zero, with $\supp u_0\cup\supp u_1\subset B_R$ for some $R>0$. For $\varepsilon>0$, let $u$ be a local in-time energy solution to \eqref{Scale-Invariant-Problem} in the corresponding sense of Definition~\ref{Def-Energy-Solution}, with lifespan $T_\varepsilon$.
	\begin{enumerate}[label=(\roman*)]
	\item If $n+\mu>1$ and $1<p\leqslant p_{\mathrm{Gla}}(n+\mu)$, then $T_\varepsilon<\infty$ for every $\varepsilon>0$. Moreover, for every sufficiently small $\varepsilon>0$, one has
	\begin{align}\label{Scale-Invariant-Lifespan-Bounds}
		T_\varepsilon\leqslant
		\begin{cases}
			C\varepsilon^{-\frac{2(p-1)}{2-(n+\mu-1)(p-1)}}&\text{if}\ \ 1<p<p_{\mathrm{Gla}}(n+\mu),\\
			\exp\big(C\varepsilon^{-(p-1)}\big)&\text{if}\ \ p=p_{\mathrm{Gla}}(n+\mu).
		\end{cases}
	\end{align}
	\item If $n=1$ and $\mu=0$, then $T_\varepsilon<\infty$ for every $p>1$ and every $\varepsilon>0$. Moreover, for every sufficiently small $\varepsilon>0$, one has
	\begin{align*}
		T_\varepsilon\leqslant C\varepsilon^{-(p-1)}.
	\end{align*}
	\end{enumerate}
	Here, the positive constant $C$ is independent of $\varepsilon$.
\end{coro}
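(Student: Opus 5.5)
This corollary follows from Corollary~\ref{Coro-Log-Damping} once we check Assumption~\ref{Assumption-Coefficients} and the logarithmic bound \eqref{Log-Damping-Assumption} for
\[
b(t)=\frac{\mu}{1+t},\qquad m(t)=\frac{\nu^2}{(1+t)^2}.
\]
The plan is to verify these hypotheses directly and then read off both conclusions, without any case distinction on the sign of $\delta$.

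\textbf{Regularity.} Clearly $b\in\mathcal{C}^1(\Rplus)$ and $\|b\|_{L^\infty}\leqslant\mu$. Also $m\in\mathcal{C}(\Rplus)$.

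\textbf{Integrability of $V$.} A direct computation from \eqref{Definition-BV} gives
\[
V(t)=\frac{\nu^2}{(1+t)^2}+\frac{\mu}{2(1+t)^2}-\frac{\mu^2}{4(1+t)^2}
=\frac{4\nu^2+2\mu-\mu^2}{4(1+t)^2}
=\frac{1-\delta}{4(1+t)^2},
\]
using $\delta=(\mu-1)^2-4\nu^2$ from \eqref{Discriminant}. Hence
\[
\|V\|_{L^1(\Rplusast)}=\frac{|1-\delta|}{4}<\infty
\]
for every $\mu\geqslant0$ and $\nu^2\geqslant0$. This holds whatever the sign of $\delta$, so the mass-dominant case is included.

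\textbf{Logarithmic bound.} We have $B(t)=\mu\log(1+t)$ exactly. So \eqref{Log-Damping-Assumption} holds with the same $\mu$ and $C_B=0$.

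\textbf{Conclusion.} The data assumptions in Corollary~\ref{Coro-Scale-Invariant} coincide with those of Theorem~\ref{Thm-General-Criterion}. The notion of energy solution is the specialization of Definition~\ref{Def-Energy-Solution} to these coefficients. Therefore Corollary~\ref{Coro-Log-Damping}(i) gives statement (i) together with \eqref{Scale-Invariant-Lifespan-Bounds}. Corollary~\ref{Coro-Log-Damping}(ii) gives statement (ii), where $\mu=0$ and $b\equiv0$.

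There is no real obstacle here; the only substantive point is the algebraic identity $V=(1-\delta)/(4(1+t)^2)$, which holds with no sign condition. If one also wants the constants to depend only on $(n,p,\mu,\nu,R,u_0,u_1)$, that follows by tracking the dependence through Theorem~\ref{Thm-General-Criterion}, since $b$ and $m$ are determined by $\mu$ and $\nu$.
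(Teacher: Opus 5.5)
Your proposal is correct and follows the paper's own proof: the paper likewise checks, via the computation $B(t)=\mu\log(1+t)$ and $V(t)=\frac{1-\delta}{4(1+t)^2}\in L^1(\Rplusast)$, that Assumption~\ref{Assumption-Coefficients} and \eqref{Log-Damping-Assumption} hold with $C_B=0$ for every sign of $\delta$. It then reads off both conclusions from Corollary~\ref{Coro-Log-Damping}.
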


\begin{remark}\label{Rem-Role-Discriminant}
	Concerning the coefficients in \eqref{Scale-Invariant-Problem}, we compute
	\begin{align*}
		B(t)=\mu\log(1+t)\ \ \text{and}\ \ V(t)=\frac{1-\delta}{4(1+t)^2}.
	\end{align*}
	Consequently, $V\in L^1(\Rplusast)$ for every $\delta\in\mb{R}$, while \eqref{Log-Damping-Assumption} holds with logarithmic coefficient $\mu$. Namely, the sign of $\delta$ does not affect the applicability of Corollary~\ref{Coro-Log-Damping}.
\end{remark}

\begin{remark}\label{Rem-Scale-Invariant-Comparison}
	For $\delta\geqslant1$, the blow-up range and the lifespan upper bounds in Corollary~\ref{Coro-Scale-Invariant} coincide with those obtained in \cite{Palmieri-Tu=2021}. When $0\leqslant\delta<1$, the result in \cite{Palmieri-Tu=2021} involves a $\delta$-dependent shift of the Glassey exponent and an additional restriction on the initial displacement, whereas Corollary~\ref{Coro-Scale-Invariant} yields the full range $1<p\leqslant p_{\mathrm{Gla}}(n+\mu)$ for general nonnegative initial data. The restriction on the sign of $\delta$ was removed in \cite{Hamza=2026} for the subcritical case, and in \cite{Hamza=2026-2} for the critical case. The present result also provides the corresponding polynomial and exponential lifespan upper bounds.
\end{remark}

\begin{remark}\label{Rem-Short-Range-Perturbations}
	The conclusions of Corollary~\ref{Coro-Scale-Invariant} remain valid under short-range perturbations of the scale-invariant coefficients. More precisely, let us consider
	\begin{align*}
		b(t)=\frac{\mu}{1+t}+r_b(t)\ \ \text{and}\ \ m(t)=\frac{\nu^2}{(1+t)^2}+r_m(t),
	\end{align*}
    with $\mu\geqslant0$ and $\nu^2\geqslant0$, where $r_b\in\ml{C}^1(\Rplus)\cap W^{1,1}(\Rplusast)$ and $r_m\in\ml{C}(\Rplus)\cap L^1(\Rplusast)$. In this case, we examine
	\begin{align*}
		B(t)=\mu\log(1+t)+\int_0^t r_b(s)\,\mathrm{d}s\leqslant\mu\log(1+t)+\|r_b\|_{L^1(\Rplusast)}
	\end{align*}
	and
	\begin{align*}
		V(t)=\frac{\nu^2+\frac{\mu}{2}-\frac{\mu^2}{4}}{(1+t)^2}+r_m(t)-\frac12r'_b(t)-\frac{\mu}{2(1+t)}r_b(t)-\frac14[r_b(t)]^2\in L^1(\Rplusast),
	\end{align*}
	in which $r_b\in L^1(\Rplusast)\cap L^\infty(\Rplusast)\subset L^2(\Rplusast)$ from $r_b\in W^{1,1}(\Rplusast)$ was used.
\end{remark}

\subsection{Coefficient assumptions and representative examples}\label{Subsection-Coefficient-Examples}

Assumption~\ref{Assumption-Coefficients} is formulated in terms of the quantities $B$ and $V$ defined in \eqref{Definition-BV}. The regularity conditions provide a natural class in which the equation and these quantities are well defined. Its main structural requirement is
\begin{align*}
	V\in L^1(\Rplusast),
\end{align*}
which is imposed on the combined contribution of the damping and mass coefficients, rather than on $b$ and $m$ separately. In particular, neither coefficient is required to have a fixed sign, and cancellations between their non-integrable components are allowed.

The upper bound for $B$ in \eqref{Log-Damping-Assumption} is a separate condition that converts the general criterion into the explicit Glassey-type range and lifespan estimates of Corollary~\ref{Coro-Log-Damping}. The following examples illustrate the coefficient classes covered by the main results.

\begin{exam}[Algebraically decaying scattering coefficients]\label{Example-Scattering}
	Let us consider
	\begin{align*}
		b(t)=\frac{b_0}{(1+t)^\beta} \ \ \text{and}\ \ m(t)=\frac{m_0}{(1+t)^\gamma},
	\end{align*}
	where $b_0,m_0\in\mb{R}$ and $\beta,\gamma>1$. Thanks to $b'(t)=-\frac{\beta b_0}{(1+t)^{\beta+1}}$, we have
	\begin{align*}
		V(t)=\frac{m_0}{(1+t)^\gamma}+\frac{\beta b_0}{2(1+t)^{\beta+1}}-\frac{b_0^2}{4(1+t)^{2\beta}}\in L^1(\Rplusast).
	\end{align*}
	Moreover, $B$ is bounded on $\Rplus$. Therefore, Corollary~\ref{Coro-Log-Damping} applies with $\mu=0$ and yields the classical Glassey range and the corresponding lifespan upper bounds.
\end{exam}

\begin{exam}[Scale-invariant damping and mass]\label{Example-Scale-Invariant}
	Let us consider
	\begin{align*}
		b(t)=\frac{\mu}{1+t}\ \ \text{and}\ \ m(t)=\frac{\nu^2}{(1+t)^2},
	\end{align*}
	with $\mu\geqslant0$ and $\nu^2\geqslant0$. This is precisely the model considered in Corollary~\ref{Coro-Scale-Invariant}. In particular, the stated Glassey-type blow-up range and lifespan estimates hold independently of the sign of $\delta$ defined in \eqref{Discriminant}.
\end{exam}

\begin{exam}[Algebraically decaying perturbations]\label{Example-Short-Range-Perturbations}
	Let us consider
	\begin{align*}
		b(t)=\frac{\mu}{1+t}+\frac{b_1}{(1+t)^\beta}\ \ \text{and}\ \ m(t)=\frac{\nu^2}{(1+t)^2}+\frac{m_1}{(1+t)^\gamma},
	\end{align*}
	with $\mu\geqslant0$ and $\nu^2\geqslant0$, where $b_1,m_1\in\mb{R}$ and $\beta,\gamma>1$.
	The perturbations
	\begin{align*}
		r_b(t)=b_1(1+t)^{-\beta} \ \ \text{and}\ \ r_m(t)=m_1(1+t)^{-\gamma}
	\end{align*}
	satisfy the assumptions of Remark~\ref{Rem-Short-Range-Perturbations}. Hence, the Glassey-type blow-up range and the lifespan upper bounds are the same as those for the unperturbed scale-invariant model.
\end{exam}

\begin{exam}[Non-integrable oscillatory perturbations]\label{Example-Oscillatory}
	Fixing $0<\alpha\leqslant1$ and $b_2\in\mb{R}\setminus\{0\}$, motivated by \cite{Ghisi-Gobbino=2024,Ghisi-Gobbino=2025}, we introduce the
	non-integrable decaying oscillatory perturbation
	\begin{align*}
		r_\alpha(t):=\frac{b_2\sin t}{(1+t)^{\alpha}}.
	\end{align*}
	Let us consider
	\begin{align*}
		b(t)=\frac{\mu}{1+t}+r_\alpha(t)\ \ \text{and}\ \ m(t)=\frac{\nu^2}{(1+t)^2}+\frac12r_\alpha'(t)+\frac{\mu}{2(1+t)}r_\alpha(t)+\frac14[r_\alpha(t)]^2+\mathcal{R}(t),
	\end{align*}
	with $\mu\geqslant0$ and $\nu^2\geqslant0$, where $\mathcal{R}\in\ml{C}(\Rplus)\cap L^1(\Rplusast)$. Although $r_\alpha\notin L^1(\Rplusast)$, Dirichlet's test suggests
	\begin{align*}
		\int_0^t r_\alpha(s)\,\mathrm{d}s=O(1)\ \ \text{as}\ \ t\to\infty.
	\end{align*}
	Consequently,
	\begin{align*}
		B(t)=\mu\log(1+t)+O(1).
	\end{align*}
	A direct substitution into the definition of $V$ gives
	\begin{align*}
		V(t)=\frac{1-\delta}{4(1+t)^2}+\mathcal{R}(t)\in L^1(\Rplusast)\ \ \text{with}\ \ \delta=(\mu-1)^2-4\nu^2.
	\end{align*}
	Therefore, Corollary~\ref{Coro-Log-Damping} applies with the same shifted dimension $n+\mu$ as in the unperturbed scale-invariant model.
\end{exam}

\begin{exam}[Nondecaying periodic coefficients]\label{Example-Periodic-Coefficients}
	Let $b_3\in\mb{R}\setminus\{0\}$. Motivated by \cite{Wirth=2008,Girardi-Wirth=2021}, let us consider
	\begin{align*}
		b(t)=b_3\sin t \ \ \text{and}\ \ m(t)=\frac{b_3}{2}\cos t+\frac{b_3^2}{4}\sin^2t.
	\end{align*}
	Although neither coefficient decays as $t\to\infty$, we have
	\begin{align*}
		B(t)=b_3(1-\cos t)\ \ \text{and}\ \ V(t)=0.
	\end{align*}
	Hence, $B$ is bounded and Assumption~\ref{Assumption-Coefficients} is satisfied. Corollary~\ref{Coro-Log-Damping} therefore yields the classical Glassey range and the corresponding lifespan upper bounds.
\end{exam}

\begin{exam}[A logarithmic correction to the scale-invariant damping]\label{Example-Logarithmic-Correction}
	Let $\mu\geqslant0$ and $\nu^2\geqslant0$. Taking
	$\kappa\in\mb{R}$, motivated by \cite{Nascimento-Wirth=2015}, let us consider
	\begin{align*}
		b(t)=\frac{\mu}{1+t}+\frac{\kappa}{(\mathrm{e}+t)\log(\mathrm{e}+t)}\ \ \text{and}\ \  m(t)=\frac{\nu^2}{(1+t)^2}.
	\end{align*}
	A direct computation shows that $V\in L^1(\Rplusast)$, whereas
	\begin{align*}
		B(t)=\mu\log(1+t)+\kappa\log\log(\mathrm{e}+t).
	\end{align*}
	Consequently, the Glassey-type time integral is given by
	\begin{align}\label{Log-Corrected-Time-Integral}
		\ml{H}_p(t)=\int_1^t(1+s)^{-\frac{(n+\mu-1)(p-1)}{2}}[\log(\mathrm{e}+s)]^{-\frac{\kappa(p-1)}{2}}\dd s.
	\end{align}
	\begin{itemize}
		\item In the subcritical case $1<p<p_{\mathrm{Gla}}(n+\mu)$, then the power of $(1+s)$ in \eqref{Log-Corrected-Time-Integral} is strictly smaller than one. Therefore,
		\begin{align*}
			\ml{H}_p(t)\to\infty\ \ \text{as}\ \ t\to\infty,
		\end{align*}
		independently of $\kappa$. Hence, the logarithmic correction does not change the subcritical blow-up range.
		\item In the critical case $p=p_{\mathrm{Gla}}(n+\mu)$ with $n+\mu>1$, i.e. $p-1=\frac{2}{n+\mu-1}$, \eqref{Log-Corrected-Time-Integral} becomes
		\begin{align}\label{Log-Corrected-Critical-Integral}
			\ml{H}_p(t)=\int_1^t\frac{\dd s}{(1+s)[\log(\mathrm{e}+s)]^{\frac{\kappa}{n+\mu-1}}}.
		\end{align}
		It follows that $\ml{H}_p(t)\to\infty$ if and only if $\kappa\leqslant n+\mu-1$.
		\begin{itemize}
			\item If $\kappa<n+\mu-1$, then $\ml{H}_p(t)\approx[\log(\mathrm{e}+t)]^{1-\frac{\kappa}{n+\mu-1}}$ for sufficiently large $t$. Therefore, Theorem~\ref{Thm-General-Criterion} yields
			\begin{align*}
				T_\varepsilon\leqslant\exp\left(C\varepsilon^{-\frac{2}{n+\mu-1-\kappa}}\right)
			\end{align*}
			for every sufficiently small $\varepsilon>0$.
			\item At the borderline value $\kappa=n+\mu-1$, we have $\ml{H}_p(t)\approx\log\log(\mathrm{e}+t)$ for sufficiently large $t$. Hence, Theorem~\ref{Thm-General-Criterion} gives the double-exponential upper bound
			\begin{align*}
				T_\varepsilon\leqslant\exp\left[\exp\left(C\varepsilon^{-\frac{2}{n+\mu-1}}\right)\right]
			\end{align*}
			for every sufficiently small $\varepsilon>0$.
			\item If $\kappa>n+\mu-1$, then the integral in \eqref{Log-Corrected-Critical-Integral} converges as $t\to\infty$. Thus, the critical endpoint is not covered by Theorem~\ref{Thm-General-Criterion} in this regime. 
		\end{itemize}
	\end{itemize}
\end{exam}

\section{Construction of a positive solution to the adjoint equation}\label{Section-Adjoint}

\subsection{The spatial test function}

For $\eta>0$, following \cite[Section 2]{Yordanov-Zhang=2006}, let us define
\begin{align}\label{Spatial-Test-Function}
	\varphi^\eta(x):=
	\begin{cases}
		\mathrm{e}^{\eta x}+\mathrm{e}^{-\eta x}&\text{if}\ \ n=1,\\
		\displaystyle\int_{\mb{S}^{n-1}}\mathrm{e}^{\eta x\cdot\omega}\dd\sigma_\omega&\text{if}\ \ n\geqslant2,
	\end{cases}
\end{align}
where $\mb{S}^{n-1}$ denotes the unit sphere in $\mb{R}^n$, endowed with the standard surface measure $\dd\sigma_\omega$. It satisfies the following modified Helmholtz equation:
\begin{align}\label{Spatial-Eigenfunction}
	\Delta\varphi^\eta=\eta^2\varphi^\eta\ \ \text{in}\ \mb{R}^n
\end{align}
and $\varphi^\eta(x)>0$. For $n\geqslant2$, by scaling the standard estimate in \cite[Lemma~2.1]{Yordanov-Zhang=2006}, we obtain
\begin{align}\label{Spatial-Pointwise-Estimate}
	\varphi^\eta(x)\lesssim\mathrm{e}^{\eta|x|}(1+\eta|x|)^{-\frac{n-1}{2}}\lesssim_{\eta}\mathrm{e}^{\eta|x|}(1+|x|)^{-\frac{n-1}{2}}.
\end{align}
For $n=1$, the corresponding estimate follows directly from \eqref{Spatial-Test-Function}.

\subsection{Reduction of the temporal adjoint equation}

The formal adjoint operator associated with the linear part of \eqref{Main-Problem} is
\begin{align*}
	\ml{L}_{b,m}^*\Psi:=\Psi_{tt}-\Delta\Psi-\partial_t\big(b(t)\Psi\big)+m(t)\Psi.
\end{align*}
We seek a separated solution of the form
\begin{align*}
	\Psi^\eta(t,x)=\lambda^\eta(t)\varphi^\eta(x).
\end{align*}
By \eqref{Spatial-Eigenfunction}, the condition $\ml{L}_{b,m}^*\Psi^\eta=0$ is equivalent to
\begin{align}\label{Temporal-Adjoint-Equation}
	(\lambda^\eta)''-(b\lambda^\eta)'+(m-\eta^2)\lambda^\eta=0.
\end{align}
To reduce \eqref{Temporal-Adjoint-Equation} to an equation involving the structural potential $V$, we employ the following Liouville transformation.

\begin{lemma}[Liouville reduction]\label{Lemma-Liouville}
	Let $\eta>0$ and $a\in\ml{C}^2(\Rplus)$. Define
	\begin{align}\label{Liouville-Ansatz}
		\lambda(t):=\mathrm{e}^{-\eta t+\frac{1}{2}B(t)}a(t).
	\end{align}
	Then
	\begin{align}\label{Liouville-Identity}
		\lambda''(t)-\big(b(t)\lambda(t)\big)'+\big(m(t)-\eta^2\big)\lambda(t)=\mathrm{e}^{-\eta t+\frac{1}{2}B(t)}\big[a''(t)-2\eta a'(t)+V(t)a(t)\big].
	\end{align}
\end{lemma}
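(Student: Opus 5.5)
The identity in Lemma~\ref{Lemma-Liouville} is a direct computation, and I will organize it so that the cancellations are visible. Set $E(t):=\mathrm{e}^{-\eta t+\frac12B(t)}$, so that $\lambda=Ea$. Since $B'=b$, we have $E'=(-\eta+\tfrac12 b)E$. Differentiating once more, using $b\in\mathcal{C}^1(\Rplus)$ from Assumption~\ref{Assumption-Coefficients}, gives
\begin{align*}
E''=\big[(-\eta+\tfrac12 b)^2+\tfrac12 b'\big]E .
\end{align*}
This regularity is needed to make sense of $b'$, and $a\in\mathcal{C}^2(\Rplus)$ makes every term continuous.

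Next I expand the three pieces of the operator by the Leibniz rule:
\begin{align*}
\lambda''&=E''a+2E'a'+Ea'',\\
(b\lambda)'&=b'Ea+bE'a+bEa'.
\end{align*}
I then factor out $E$ throughout and collect the coefficients of $a''$, $a'$ and $a$ separately.

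For $a''$ the coefficient is $1$. For $a'$ it is $2(-\eta+\tfrac12 b)-b=-2\eta$, which is the point of the choice of the exponent $\tfrac12B$: it removes the first-order damping contribution. For $a$ the coefficient is
\begin{align*}
(-\eta+\tfrac12 b)^2+\tfrac12 b'-b'-b(-\eta+\tfrac12 b)+m-\eta^2 .
\end{align*}
Expanding, $(-\eta+\tfrac12 b)^2=\eta^2-\eta b+\tfrac14 b^2$, and $-b(-\eta+\tfrac12 b)=\eta b-\tfrac12 b^2$. The $\eta^2$ terms cancel, the $\eta b$ terms cancel, and what remains is
\begin{align*}
\tfrac14 b^2-\tfrac12 b^2-\tfrac12 b'+m=m-\tfrac12 b'-\tfrac14 b^2=V(t),
\end{align*}
by the definition \eqref{Definition-BV}. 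Together these give \eqref{Liouville-Identity}.

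There is no genuine obstacle. The only care needed is in the coefficient of $a$: the $\eta b$ cross terms coming from $E''$ and from $bE'$ must be tracked so that they cancel exactly, leaving only the combination $V$.
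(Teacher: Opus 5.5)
Your proof is correct and follows the paper's argument: a direct Leibniz-rule expansion of $\lambda''$ and $(b\lambda)'$, factoring out $\mathrm{e}^{-\eta t+\frac12B(t)}$ and collecting the coefficients of $a''$, $a'$ and $a$. In the coefficient of $a$, the $\eta^2$ and $\eta b$ terms cancel, leaving exactly $V$.
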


\begin{proof}
	Since $B'(t)=b(t)$, direct differentiation of \eqref{Liouville-Ansatz} gives
	\begin{align*}
		\lambda'&=\mathrm{e}^{-\eta t+\frac{1}{2}B(t)}\left[a'+\left(-\eta+\frac{1}{2}b\right)a\right],\\
		\lambda''&=\mathrm{e}^{-\eta t+\frac{1}{2}B(t)}\left[a''+2\left(-\eta+\frac{1}{2}b\right)a'+\left\{\left(-\eta+\frac{1}{2}b\right)^2+\frac{1}{2}b'\right\}a\right],
	\end{align*}
	and
	\begin{align*}
		(b\lambda)'=\mathrm{e}^{-\eta t+\frac{1}{2}B(t)}\left[ba'+\left\{b'+b\left(-\eta+\frac{1}{2}b\right)\right\}a\right].
	\end{align*}
	Consequently,
	\begin{align*}
		\lambda''-(b\lambda)'+(m-\eta^2)\lambda&=\mathrm{e}^{-\eta t+\frac{1}{2}B(t)}\left[a''-2\eta a'+\left(m-\frac{1}{2}b'-\frac{1}{4}b^2\right)a\right]\\
		&=\mathrm{e}^{-\eta t+\frac{1}{2}B(t)}\big(a''-2\eta a'+Va\big),
	\end{align*}
	which proves \eqref{Liouville-Identity}.
\end{proof}

It follows from \cref{Lemma-Liouville} that it remains to construct a positive solution to
\begin{align}\label{Amplitude-Equation}
	a_\eta''(t)-2\eta a_\eta'(t)+V(t)a_\eta(t)=0
\end{align}
satisfying
\begin{align}\label{Amplitude-Asymptotic-Conditions}
	\lim_{t\to\infty}a_\eta(t)=1\ \ \text{as well as}\ \ \lim_{t\to\infty}a_\eta'(t)=0.
\end{align}

\subsection{The Volterra equation}

We first derive the integral equation associated with the asymptotic conditions in \eqref{Amplitude-Asymptotic-Conditions}. Suppose temporarily that $a$ is a bounded solution to \eqref{Amplitude-Equation} satisfying these conditions. Multiplying \eqref{Amplitude-Equation} by $\mathrm{e}^{-2\eta t}$ gives
\begin{align*}
	\frac{\dd}{\dd t}\left(\mathrm{e}^{-2\eta t}a'(t)\right)=-\mathrm{e}^{-2\eta t}V(t)a(t).
\end{align*}
Integrating over $[t,\infty)$, we obtain
\begin{align*}
	a'(t)=\int_t^\infty\mathrm{e}^{-2\eta(s-t)}V(s)a(s)\dd s.
\end{align*}
A further integration and Fubini's theorem yield
\begin{align*}
	a(t)&=1-\int_t^\infty\int_\tau^\infty\mathrm{e}^{-2\eta(s-\tau)}V(s)a(s)\dd s\dd\tau\\
	&=1-\frac{1}{2\eta}\int_t^\infty\big(1-\mathrm{e}^{-2\eta(s-t)}\big)V(s)a(s)\dd s.
\end{align*}
Here, the use of Fubini's theorem is justified by $V\in L^1(\Rplusast)$ and the boundedness of $a$. Therefore, every bounded solution of the asymptotic problem satisfies the Volterra equation introduced below.

Let us introduce $M_V:=\|V\|_{L^1(\Rplusast)}$. Then we have the next preliminary.

\begin{prop}[Positive asymptotic amplitude]\label{Prop-Volterra}
	Let $\eta\geqslant\max\{1,16M_V\}$. Then there exists a unique $a_\eta\in\ml{C}_b(\Rplus)$ satisfying
	\begin{align}\label{Volterra-Equation}
		a_\eta(t)=1-\frac{1}{2\eta}\int_t^\infty\big(1-\mathrm{e}^{-2\eta(s-t)}\big)V(s)a_\eta(s)\dd s.
	\end{align}
	Moreover, $a_\eta\in\ml{C}^2(\Rplus)$ fulfills
	\begin{align}\label{Amplitude-Bounds}
		\frac{30}{31}\leqslant a_\eta(t)\leqslant\frac{32}{31}\ \ \text{for every}\ \ t\geqslant0,
	\end{align}
	and $a_\eta$ solves \eqref{Amplitude-Equation} and satisfies \eqref{Amplitude-Asymptotic-Conditions}.
\end{prop}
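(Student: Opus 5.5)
We argue by a contraction mapping on the Banach space $\ml{C}_b(\Rplus)$ with the sup norm. Define the operator
\begin{align*}
	(\ml{T}a)(t):=1-\frac{1}{2\eta}\int_t^\infty K_\eta(t,s)V(s)a(s)\dd s,\qquad K_\eta(t,s):=1-\mathrm{e}^{-2\eta(s-t)}.
\end{align*}
Since $0\leqslant K_\eta(t,s)\leqslant1$ for $s\geqslant t$ and $V\in L^1(\Rplusast)$, the integral converges absolutely for every $a\in\ml{C}_b(\Rplus)$. Continuity of $\ml{T}a$ in $t$ follows from dominated convergence, because the integrand $\mathbf{1}_{\{s\geqslant t\}}K_\eta(t,s)V(s)a(s)$ is dominated by $|V(s)|\,\|a\|_\infty$. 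The same domination gives $|(\ml{T}a)(t)-1|\leqslant \frac{M_V}{2\eta}\|a\|_\infty$. Hence $\ml{T}$ maps $\ml{C}_b(\Rplus)$ into itself, and
\begin{align*}
	\|\ml{T}a-\ml{T}\tilde a\|_\infty\leqslant\frac{M_V}{2\eta}\|a-\tilde a\|_\infty\leqslant\frac{1}{32}\|a-\tilde a\|_\infty,
\end{align*}
using $\eta\geqslant16M_V$. Banach's fixed point theorem then gives a unique fixed point $a_\eta\in\ml{C}_b(\Rplus)$, which is \eqref{Volterra-Equation}. If $M_V=0$, then $a_\eta\equiv1$ and everything is trivial. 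The condition $\eta\geqslant1$ only ensures $\eta>0$ here and is presumably needed later.

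For the bounds \eqref{Amplitude-Bounds}, write $\|a_\eta\|_\infty\leqslant 1+\frac{1}{32}\|a_\eta\|_\infty$, which gives $\|a_\eta\|_\infty\leqslant\frac{32}{31}$. Consequently
\begin{align*}
	|a_\eta(t)-1|\leqslant\frac{1}{32}\cdot\frac{32}{31}=\frac{1}{31},
\end{align*}
which is exactly $\frac{30}{31}\leqslant a_\eta\leqslant\frac{32}{31}$. This also yields $\lim_{t\to\infty}a_\eta(t)=1$, since the tail satisfies $|a_\eta(t)-1|\leqslant\frac{1}{2\eta}\cdot\frac{32}{31}\int_t^\infty|V|\dd s\to0$.

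The main obstacle is the regularity: $V$ is only continuous, and $b\in\ml{C}^1$, $m\in\ml{C}$ make $V\in\ml{C}(\Rplus)$. To show $a_\eta\in\ml{C}^2(\Rplus)$, we split the kernel and write
\begin{align*}
	a_\eta(t)=1-\frac{1}{2\eta}\int_t^\infty Va_\eta\dd s+\frac{1}{2\eta}\mathrm{e}^{2\eta t}\int_t^\infty\mathrm{e}^{-2\eta s}V(s)a_\eta(s)\dd s.
\end{align*}
Both integrands are continuous and integrable, so the fundamental theorem of calculus gives $a_\eta\in\ml{C}^1$. The two boundary terms cancel, leaving
\begin{align*}
	a_\eta'(t)=\mathrm{e}^{2\eta t}\int_t^\infty\mathrm{e}^{-2\eta s}V(s)a_\eta(s)\dd s.
\end{align*}
This expression is again $\ml{C}^1$, with
\begin{align*}
	a_\eta''=2\eta a_\eta'-Va_\eta,
\end{align*}
so $a_\eta\in\ml{C}^2(\Rplus)$ and \eqref{Amplitude-Equation} holds. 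Finally,
\begin{align*}
	|a_\eta'(t)|\leqslant\frac{32}{31}\int_t^\infty\mathrm{e}^{-2\eta(s-t)}|V(s)|\dd s\leqslant\frac{32}{31}\int_t^\infty|V|\dd s\to0,
\end{align*}
which completes \eqref{Amplitude-Asymptotic-Conditions}.
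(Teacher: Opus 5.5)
Your proposal is correct and follows the paper's proof essentially step by step. Both use a contraction of the affine Volterra map on $\mathcal{C}_b(\mathbb{R}_+)$ with constant $M_V/(2\eta)\leqslant 1/32$, then the bootstrap bound $\|a_\eta\|_\infty\leqslant 32/31$ giving $|a_\eta-1|\leqslant 1/31$, then $a_\eta'(t)=\int_t^\infty \mathrm{e}^{-2\eta(s-t)}V(s)a_\eta(s)\,\mathrm{d}s$ and $a_\eta''=2\eta a_\eta'-Va_\eta$, with tail estimates for the limits at infinity; your explicit splitting of the kernel is simply a careful justification of the paper's ``differentiating under the integral sign.''
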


\begin{proof}
Let us consider the Banach space
\begin{align*}
	X:=\ml{C}_b(\Rplus) \ \ \text{endowed with}\ \ \|a\|_X:=\sup_{t\geqslant0}|a(t)|.
\end{align*}
For $a\in X$, define
\begin{align*}
	(\ml{K}_\eta a)(t):=\frac{1}{2\eta}\int_t^\infty\big(1-\mathrm{e}^{-2\eta(s-t)}\big)V(s)a(s)\dd s.
\end{align*}
Since $0\leqslant1-\mathrm{e}^{-2\eta(s-t)}\leqslant1$ for $s\geqslant t$, we have
\begin{align}\label{Volterra-Norm}
	\|\ml{K}_\eta a\|_X\leqslant\frac{M_V}{2\eta}\|a\|_X.
\end{align}
The dominated convergence theorem and \eqref{Volterra-Norm} show that $\ml{K}_\eta$ maps $X$ into itself.

Define the affine map $\ml{T}_\eta a:=1-\ml{K}_\eta a$. By \eqref{Volterra-Norm} and $\eta\geqslant16M_V$, for any $a_1,a_2\in X$, one has
\begin{align*}
	\|\ml{T}_\eta a_1-\ml{T}_\eta a_2\|_X\leqslant\frac{1}{32}\|a_1-a_2\|_X.
\end{align*}
Thus, the Banach fixed point theorem yields a unique $a_\eta\in X$ satisfying \eqref{Volterra-Equation}.

Setting $\kappa_\eta:=\frac{M_V}{2\eta}$, from $\eta\geqslant16M_V$, we have $\kappa_\eta\leqslant\frac{1}{32}$. By \eqref{Volterra-Equation} and \eqref{Volterra-Norm}, one derives
\begin{align}\label{Amplitude-Upper}
	\|a_\eta\|_X\leqslant1+\kappa_\eta\|a_\eta\|_X\leqslant\frac{32}{31}.
\end{align}
Using \eqref{Volterra-Equation} once more, we obtain
\begin{align*}
	\|a_\eta-1\|_X\leqslant\kappa_\eta\|a_\eta\|_X\leqslant\frac{1}{31},
\end{align*}
which proves \eqref{Amplitude-Bounds}.

We next verify the regularity of $a_\eta$. Differentiating \eqref{Volterra-Equation} under the integral sign gives
\begin{align}\label{First-Derivative-Amplitude}
	a_\eta'(t)&=\int_t^\infty\mathrm{e}^{-2\eta(s-t)}V(s)a_\eta(s)\dd s,\\
	a_\eta''(t)&=-V(t)a_\eta(t)+2\eta a_\eta'(t).\notag
\end{align}
Therefore, $a_\eta\in\ml{C}^2(\Rplus)$ and solves \eqref{Amplitude-Equation}.

Eventually, recalling $V\in L^1(\Rplusast)$, it follows from \eqref{Volterra-Equation}, \eqref{Amplitude-Upper} and \eqref{First-Derivative-Amplitude} that
\begin{align*}
	|a_\eta(t)-1|&\leqslant\frac{16}{31\eta}\int_t^\infty|V(s)|\dd s\to0,\\
	|a_\eta'(t)|&\leqslant\frac{32}{31}\int_t^\infty|V(s)|\dd s\to0,
\end{align*}
as $t\to\infty$. This proves \eqref{Amplitude-Asymptotic-Conditions}.
\end{proof}

\begin{remark}
	\label{Remark-First-Correction}
	Let us consider
	\begin{align*}
		V(t)=\frac{q_0}{(1+t)^2}\ \ \text{with}\ \ q_0\in\mb{R}.
	\end{align*}
	For $\eta\geqslant\eta_*:=\max\{1,16|q_0|\}$, \eqref{Volterra-Equation} and \eqref{Amplitude-Upper} imply
	\begin{align}\label{First-Correction-Rough}
		|a_\eta(t)-1|\leqslant\frac{|q_0|}{2\eta}\|a_\eta\|_{L^\infty(\Rplusast)}\int_t^\infty(1+s)^{-2}\dd s \leqslant\frac{16|q_0|}{31\eta(1+t)}
	\end{align}
	for every $t\geqslant0$. We now rewrite \eqref{Volterra-Equation} as
	\begin{align*}
		a_\eta(t)-1&=-\frac{q_0}{2\eta}\int_t^\infty(1+s)^{-2}\dd s+\frac{q_0}{2\eta}\int_t^\infty\mathrm{e}^{-2\eta(s-t)}(1+s)^{-2}\dd s\\
		&\quad -\frac{q_0}{2\eta}\int_t^\infty\left(1-\mathrm{e}^{-2\eta(s-t)}\right)(1+s)^{-2}\big(a_\eta(s)-1\big)\dd s.
	\end{align*}
	The first term on the right-hand side is given by
	\begin{align*}
		-\frac{q_0}{2\eta}\int_t^\infty(1+s)^{-2}\dd s=-\frac{q_0}{2\eta(1+t)}.
	\end{align*}
	For the second term, we have
	\begin{align*}
		\int_t^\infty\mathrm{e}^{-2\eta(s-t)}(1+s)^{-2}\dd s\leqslant\frac{1}{(1+t)^2}\int_t^\infty\mathrm{e}^{-2\eta(s-t)}\dd s=\frac{1}{2\eta(1+t)^2}.
	\end{align*}
	Moreover, it follows from \eqref{First-Correction-Rough} that
	\begin{align*}
	\int_t^\infty\left(1-\mathrm{e}^{-2\eta(s-t)}\right)(1+s)^{-2}|a_\eta(s)-1|\dd s\leqslant\frac{16|q_0|}{31\eta}\int_t^\infty(1+s)^{-3}\dd s=\frac{8|q_0|}{31\eta(1+t)^2}.
	\end{align*}
	Combining the preceding estimates, we obtain
	\begin{align*}
		a_\eta(t)=1-\frac{q_0}{2\eta(1+t)}+O\left(\frac{1}{\eta^2(1+t)^2}\right).
	\end{align*}
	The implicit constant depends only on $q_0$ and is uniform for $t\geqslant0$ and $\eta\geqslant\eta_*$. In particular, for $q_0=\frac{1-\delta}{4}$, with $\delta$ defined in \eqref{Discriminant}, we have
	\begin{align*}
		a_\eta(t)=1-\frac{1-\delta}{8\eta(1+t)}+O\left(\frac{1}{\eta^2(1+t)^2}\right).
	\end{align*}
	Therefore, the first correction associated with the	scale-invariant potential is recovered directly from the Volterra equation.
\end{remark}

\subsection{The positive adjoint solution and its logarithmic derivative}

We use the positive amplitude obtained in Proposition~\ref{Prop-Volterra} to construct an exact solution to the adjoint equation. We normalize its temporal factor at $t=0$ and choose $\eta$ sufficiently large to obtain uniform bounds for its logarithmic derivative.

Let us set $M_b:=\|b\|_{L^\infty(\Rplusast)}$ and fix
\begin{align}\label{Eta-Threshold}
	\eta\geqslant\eta_0:=\max\{2,16M_V,8M_b\}.
\end{align}
We define the temporal profile and the corresponding separated test function by
\begin{align}\label{Temporal-Test-Function}
	\lambda^\eta(t)&:=\mathrm{e}^{-\eta t+\frac{1}{2}B(t)}\frac{a_\eta(t)}{a_\eta(0)},\\\label{Adjoint-Test-Function}
	\Psi^\eta(t,x)&:=\lambda^\eta(t)\varphi^\eta(x).
\end{align}
It follows from Lemma~\ref{Lemma-Liouville} and Proposition~\ref{Prop-Volterra} that
\begin{align}\label{Exact-Adjoint-Equation}
	\ml{L}_{b,m}^*\Psi^\eta=0,\ \ \lambda^\eta(0)=1,\ \ \Psi^\eta(t,x)>0.
\end{align}
Furthermore, \eqref{Amplitude-Bounds} implies
\begin{align}\label{Temporal-Comparability}
	\lambda^\eta(t)\approx\mathrm{e}^{-\eta t+\frac{1}{2}B(t)}\ \ \text{for every}\ \ t\geqslant0.
\end{align}
\begin{remark}\label{Remark-Decaying-Mode}
    For the scale-invariant coefficients $b(t)=\frac{\mu}{1+t}$ and $m(t)=\frac{\nu^2}{(1+t)^2}$, we have
	\begin{align*}
		B(t)=\mu\log(1+t) \ \ \text{and}\ \ V(t)=\frac{1-\delta}{4(1+t)^2}.
	\end{align*}
	Since $a_\eta(t)\to1$ as $t\to\infty$, it follows from \eqref{Temporal-Test-Function} that
	\begin{align*}
		\lambda^\eta(t)\sim\frac{1}{a_\eta(0)}(1+t)^{\frac{\mu}{2}}\mathrm{e}^{-\eta t}\ \ \text{as}\ \ t\to\infty.
	\end{align*}
	This agrees with the large-time behavior of the decaying modified Bessel profile appearing in the explicit representation of the scale-invariant adjoint equation. Thus, the Volterra construction recovers this profile without using special functions and justifies its positivity on $\Rplus$ for sufficiently large $\eta$.
\end{remark}

To prepare the weighted functional estimates, we introduce the logarithmic derivatives
\begin{align*}
	y_\eta(t):=\frac{a_\eta'(t)}{a_\eta(t)} \ \ \text{as well as}\ \ r^\eta(t):=\frac{(\lambda^\eta)'(t)}{\lambda^\eta(t)},
\end{align*}
together with
\begin{align*}
	z_\eta(t):=\eta-y_\eta(t).
\end{align*}
It follows from \eqref{Temporal-Test-Function} that
\begin{align}\label{r-Identity}
	r^\eta(t)=-\eta+\frac{1}{2}b(t)+y_\eta(t)=\frac{1}{2}b(t)-z_\eta(t).
\end{align}
The combinations of $b$ and $r^\eta$ that arise naturally in the weighted adjoint identities are
\begin{align}\label{Beta-Gamma-Definition}
	\beta^\eta(t):=b(t)-r^\eta(t)\ \ \text{and}\ \ \gamma^\eta(t):=b(t)-2r^\eta(t).
\end{align}

\begin{lemma}[Uniform bounds for the adjoint coefficients]\label{Lemma-Coefficient-Bounds}
	For $\eta\geqslant\eta_0$, the following estimates hold for every $t\geqslant0$:
	\begin{align}\label{y-z-Bounds}
		|y_\eta(t)|\leqslant\frac{\eta}{8}\ \ \text{and} \ \ \frac{7\eta}{8}\leqslant z_\eta(t)\leqslant\frac{9\eta}{8},
	\end{align}
moreover,
	\begin{align}\label{Beta-Gamma-Bounds}
		\frac{13\eta}{16}\leqslant\beta^\eta(t)\leqslant\frac{19\eta}{16}\ \ \text{and}\ \ \frac{7\eta}{4}\leqslant\gamma^\eta(t)\leqslant\frac{9\eta}{4}.
	\end{align}
\end{lemma}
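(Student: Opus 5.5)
The argument is direct: first bound $y_\eta$ using the integral formula for $a_\eta'$ from Proposition~\ref{Prop-Volterra}, then read off $z_\eta$, $\beta^\eta$ and $\gamma^\eta$ from the identity \eqref{r-Identity} together with $|b|\leqslant M_b\leqslant\eta/8$.

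\textbf{Bound for $y_\eta$.} By \eqref{First-Derivative-Amplitude}, since $\mathrm{e}^{-2\eta(s-t)}\leqslant1$ for $s\geqslant t$, and using \eqref{Amplitude-Bounds},
\begin{align*}
|a_\eta'(t)|\leqslant\frac{32}{31}\int_t^\infty|V(s)|\dd s\leqslant\frac{32}{31}M_V.
\end{align*}
Dividing by $a_\eta(t)\geqslant\frac{30}{31}$ gives
\begin{align*}
|y_\eta(t)|\leqslant\frac{32}{30}M_V=\frac{16}{15}M_V.
\end{align*}
The threshold \eqref{Eta-Threshold} gives $M_V\leqslant\eta/16$, so $|y_\eta(t)|\leqslant\frac{16}{15}\cdot\frac{\eta}{16}=\frac{\eta}{15}\leqslant\frac{\eta}{8}$. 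The bounds $\frac{7\eta}{8}\leqslant z_\eta\leqslant\frac{9\eta}{8}$ then follow immediately from $z_\eta=\eta-y_\eta$.

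\textbf{Bounds for $\beta^\eta$ and $\gamma^\eta$.} By \eqref{r-Identity}, $r^\eta=\frac12 b-z_\eta$. Substituting into \eqref{Beta-Gamma-Definition} gives
\begin{align*}
\beta^\eta=\tfrac12 b+z_\eta\ \ \text{and}\ \ \gamma^\eta=2z_\eta.
\end{align*}
For $\gamma^\eta$, the bounds on $z_\eta$ give $\frac{7\eta}{4}\leqslant\gamma^\eta\leqslant\frac{9\eta}{4}$ directly. For $\beta^\eta$, we have $|b(t)|\leqslant M_b\leqslant\eta/8$ from \eqref{Eta-Threshold}, hence $|\frac12 b|\leqslant\frac{\eta}{16}$. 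Combining with the bounds on $z_\eta$ yields $\frac{13\eta}{16}\leqslant\beta^\eta\leqslant\frac{19\eta}{16}$.

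There is no real obstacle here. The only point requiring care is using the pointwise sup bound on $a_\eta$ together with the positive lower bound $\frac{30}{31}$ when dividing, and checking that the constant $\frac{16}{15}$ is absorbed by the threshold $\eta\geqslant16M_V$. The case $M_V=0$ is trivial, since then $a_\eta\equiv1$.
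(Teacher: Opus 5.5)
Your proposal is correct and follows the paper's proof step for step. You bound $a_\eta'$ through its integral formula and the upper bound on $a_\eta$, divide by $a_\eta\geqslant\frac{30}{31}$, and absorb $\frac{16}{15}M_V$ via $\eta\geqslant16M_V$. You then rewrite $\beta^\eta=z_\eta+\frac12 b$ and $\gamma^\eta=2z_\eta$ and use $|b|\leqslant M_b\leqslant\eta/8$.
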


\begin{proof}
	By \eqref{First-Derivative-Amplitude} and \eqref{Amplitude-Bounds}, one gets
	\begin{align*}
		|a_\eta'(t)|\leqslant\frac{32}{31}\int_t^\infty|V(s)|\dd s\leqslant\frac{32}{31}M_V.
	\end{align*}
	Since $a_\eta(t)\geqslant\frac{30}{31}$ and $\eta\geqslant16M_V$, we obtain
	\begin{align*}
		|y_\eta(t)|\leqslant\frac{16}{15}M_V\leqslant\frac{\eta}{15}\leqslant\frac{\eta}{8}.
	\end{align*}
	The bounds for $z_\eta$ in \eqref{y-z-Bounds} follow from $z_\eta=\eta-y_\eta$. Moreover, \eqref{r-Identity} and \eqref{Beta-Gamma-Definition} give
	\begin{align*}
		\beta^\eta(t)=z_\eta(t)+\frac{1}{2}b(t)\ \ \text{and}\ \ \gamma^\eta(t)=2z_\eta(t).
	\end{align*}
	Because of $|b(t)|\leqslant M_b\leqslant\frac{\eta}{8}$, \eqref{Beta-Gamma-Bounds} follows from \eqref{y-z-Bounds}.
\end{proof}

\begin{remark}\label{Remark-Weaker-Regularity}
	The fixed point argument for the Volterra equation uses only $V\in L^1(\Rplusast)$. The continuity of $V$ is used to recover the differential equation and the $\ml{C}^2$-regularity of the amplitude. Extending the adjoint weak formulation and the subsequent weighted identities to rougher coefficients would require an approximation argument, including the treatment of the initial trace of $b$ and the passage to the limit in the adjoint equation. We do not pursue this extension and retain Assumption~\ref{Assumption-Coefficients} throughout the paper.
\end{remark}

\begin{remark}\label{Remark-Bounded-Damping}
The Volterra construction of $a_\eta$ uses only the assumption $V\in L^1(\Rplusast)$. The boundedness of $b$ is required in \cref{Lemma-Coefficient-Bounds} to obtain the uniform positivity and comparability of $\beta^\eta$ and $\gamma^\eta$, which are used in the weighted functional estimates. Hence, unbounded damping would require a modification of the comparison argument, while the Volterra construction remains unchanged.
\end{remark}

\section{Lower bound estimates for the time-dependent functionals}\label{Section-Functionals}

\subsection{The adjoint integral identity}

We introduce the weighted functionals associated with $u$, $u_t$, and the nonlinear term:
\begin{align*}
	F^\eta(t)&:=\int_{\mb{R}^n}u(t,x)\Psi^\eta(t,x)\dd x,\\
	G^\eta(t)&:=\int_{\mb{R}^n}u_t(t,x)\Psi^\eta(t,x)\dd x,\\
	N^\eta(t)&:=\int_{\mb{R}^n}|u_t(t,x)|^p\Psi^\eta(t,x)\dd x.
\end{align*}
By \eqref{Support-Condition} and \eqref{Support-Time-Derivative}, these integrals are well defined. Indeed, on every compact interval $[0,T_0]\subset[0,T_\varepsilon)$, the functions $\Psi^\eta$ and
$\Psi_t^\eta$ are bounded on $[0,T_0]\times B_{R+T_0}$. As a consequence,
\begin{align*}
	F^\eta\in\ml{C}^1([0,T_\varepsilon)),\ \ G^\eta\in\ml{C}([0,T_\varepsilon)),\ \ N^\eta\in L^1_{\mathrm{loc}}([0,T_\varepsilon)).
\end{align*}

\begin{lemma}[Weighted adjoint identity]\label{Lemma-Weighted-Identity}
	Let $u$ be a local in-time energy solution to \eqref{Main-Problem} in the sense of Definition~\ref{Def-Energy-Solution}. Then, for every $t\in[0,T_\varepsilon)$,
	\begin{align}\label{Weighted-Identity}
		G^\eta(t)+\beta^\eta(t)F^\eta(t)=\int_0^tN^\eta(s)\dd s+\varepsilon C^\eta(u_0,u_1),
	\end{align}
	where
	\begin{align}\label{Initial-Weighted-Constant}
		C^\eta(u_0,u_1):=\int_{\mb{R}^n}\big[u_1(x)+\beta^\eta(0)u_0(x)\big]\varphi^\eta(x)\dd x.
	\end{align}
	If $u_0$ and $u_1$ are nonnegative and not both identically zero, then
	\begin{align}\label{Positive-Initial-Weighted-Constant}
		C^\eta(u_0,u_1)>0.
	\end{align}
\end{lemma}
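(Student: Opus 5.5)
The plan is to insert the exact adjoint solution $\Psi^\eta$ into the adjoint weak formulation of Lemma~\ref{Lemma-Adjoint-Weak}. The first issue is that $\Psi^\eta$ is not compactly supported in $x$. By \eqref{Support-Condition} and \eqref{Support-Time-Derivative}, $u(s,\cdot)$ and $u_t(s,\cdot)$ vanish outside $\overline{B}_{R+s}$. So on $[0,t]$ we replace $\Psi^\eta$ by $\chi(x)\Psi^\eta(s,x)$, with $\chi\in\ml{C}_0^\infty$ and $\chi\equiv1$ on a neighbourhood of $\overline{B}_{R+t}$. We also cut off in time beyond $t$, which is harmless because the identity only involves $[0,t]$. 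Every term in Lemma~\ref{Lemma-Adjoint-Weak} then coincides with the corresponding term for $\Psi^\eta$. One technical point remains: $\lambda^\eta$ is only $\ml{C}^2$ (since $a_\eta\in\ml{C}^2$ and $B\in\ml{C}^2$), not $\ml{C}^\infty$. I would handle it either by noting that the weak formulation extends by density to $\ml{C}^2$ test functions with compact support, or by mollifying $\lambda^\eta$ in time and passing to the limit. This uses $u\in\ml{C}([0,T),H^1)\cap\ml{C}^1([0,T),L^2)$ and $u_t\in L^p_{\mathrm{loc}}$.

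With this test function, \eqref{Exact-Adjoint-Equation} makes the space-time integral involving $\ml{L}^*_{b,m}\Psi^\eta$ vanish. Since $\Phi_t=r^\eta\Psi^\eta$, the boundary term at time $t$ becomes
\begin{align*}
\int_{\mb{R}^n}\big[u_t\Psi^\eta-r^\eta u\Psi^\eta+b u\Psi^\eta\big]\dd x=G^\eta(t)+\beta^\eta(t)F^\eta(t),
\end{align*}
using \eqref{Beta-Gamma-Definition}. The nonlinear term is exactly $\int_0^tN^\eta(s)\dd s$. At $s=0$ we have $\lambda^\eta(0)=1$ and $\Psi^\eta_t(0,x)=r^\eta(0)\varphi^\eta(x)$. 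The initial-data term is therefore
\begin{align*}
\varepsilon\int_{\mb{R}^n}\big[u_1+(b(0)-r^\eta(0))u_0\big]\varphi^\eta\dd x=\varepsilon C^\eta(u_0,u_1),
\end{align*}
which proves \eqref{Weighted-Identity}.

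For the positivity statement \eqref{Positive-Initial-Weighted-Constant}, Lemma~\ref{Lemma-Coefficient-Bounds} gives $\beta^\eta(0)\geqslant\frac{13\eta}{16}>0$, and $\varphi^\eta>0$ everywhere. Hence the integrand $(u_1+\beta^\eta(0)u_0)\varphi^\eta$ is nonnegative. It is positive on a set of positive measure, because $u_0\not\equiv0$ or $u_1\not\equiv0$ as $L^2$ functions. This gives $C^\eta>0$.

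The only step requiring care is justifying the use of the noncompactly supported, merely $\ml{C}^2$ test function. I expect the cutoff argument combined with density to handle it routinely.
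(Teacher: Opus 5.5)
Your proposal is correct and follows the paper's argument. The paper likewise localizes $\Psi^\eta$ using the support property, inserts it into the adjoint weak formulation, uses $\mathcal{L}_{b,m}^*\Psi^\eta=0$, $\lambda^\eta(0)=1$ and $\Psi^\eta_t=r^\eta\Psi^\eta$ to reduce the boundary terms to $G^\eta+\beta^\eta F^\eta$ and $\varepsilon C^\eta(u_0,u_1)$, and obtains positivity from $\varphi^\eta>0$ and $\beta^\eta(0)>0$. Your explicit handling of the merely $\mathcal{C}^2$ temporal profile (by density or mollification) fills in a detail that the paper leaves inside its ``standard space-time localization argument.''
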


\begin{proof}
	By the support condition, $\Psi^\eta$ can be used in the adjoint weak formulation after a standard space-time localization argument. Since $\ml{L}_{b,m}^*\Psi^\eta=0$, $\lambda^\eta(0)=1$, and $\Psi_t^\eta=r^\eta\Psi^\eta$, the adjoint weak formulation gives \eqref{Weighted-Identity}. The positivity of $C^\eta(u_0,u_1)$ follows from $\varphi^\eta>0$, $\beta^\eta(0)>0$, and the assumptions on the initial data.
\end{proof}

\subsection{Estimate for the first weighted functional}

To simplify the weighted adjoint identity, one may define
\begin{align*}
	J^\eta(t):=\int_0^tN^\eta(s)\dd s+\varepsilon C^\eta(u_0,u_1).
\end{align*}
Under the assumptions on the initial data, $J^\eta$ is locally absolutely continuous and satisfies
\begin{align*}
	J^\eta(t)>0 \ \ \text{and}\ \	(J^\eta)'(t)=N^\eta(t)\geqslant0
\end{align*}
for almost every $t\in(0,T_\varepsilon)$. Therefore, \eqref{Weighted-Identity} can be written as
\begin{align}\label{G-Beta-F-J}
	G^\eta(t)+\beta^\eta(t)F^\eta(t)=J^\eta(t).
\end{align}

\begin{lemma}[Positivity and upper estimate for $F^\eta$]\label{Lemma-F-Estimate}
	Under the assumptions on the initial data, for every $t\in[0,T_\varepsilon)$,
	\begin{align}\label{F-Differential-Equation}
		(F^\eta)'(t)+\gamma^\eta(t)F^\eta(t)=J^\eta(t).
	\end{align}
	Solving this first-order equation yields
	\begin{align}\label{F-Representation}
		F^\eta(t)=\mathrm{e}^{-\int_0^t\gamma^\eta(\tau)\dd\tau}F^\eta(0)+\int_0^t\mathrm{e}^{-\int_s^t\gamma^\eta(\tau)\dd\tau}J^\eta(s)\dd s.
	\end{align}
	In particular, $F^\eta(t)>0$ for every $t>0$. Moreover,
	\begin{align}\label{F-Upper-Estimate}
		F^\eta(t)\leqslant\mathrm{e}^{-\frac{7\eta}{4}t}F^\eta(0)+\frac{4}{7\eta}J^\eta(t).
	\end{align}
\end{lemma}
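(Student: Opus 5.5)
The plan is to differentiate $F^\eta$ and rewrite everything through the weighted identity \eqref{G-Beta-F-J}. Since $u\in\ml{C}^1([0,T),L^2)$, the support condition \eqref{Support-Condition} holds, and $\Psi^\eta$ is smooth with $\Psi^\eta_t=r^\eta\Psi^\eta$, differentiation under the integral gives
\begin{align*}
	(F^\eta)'(t)=G^\eta(t)+r^\eta(t)F^\eta(t).
\end{align*}
To justify differentiating under the integral, I would localize to $B_{R+T_0}$ on a compact interval $[0,T_0]$. Substituting $G^\eta=J^\eta-\beta^\eta F^\eta$ from \eqref{G-Beta-F-J} gives
\begin{align*}
	(F^\eta)'=J^\eta-(\beta^\eta-r^\eta)F^\eta.
\end{align*}
By \eqref{Beta-Gamma-Definition}, $\beta^\eta-r^\eta=b-2r^\eta=\gamma^\eta$, and this is exactly \eqref{F-Differential-Equation}.

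Since $\gamma^\eta=2z_\eta$ is continuous and $J^\eta$ is continuous, multiplying by the integrating factor $\mathrm{e}^{\int_0^t\gamma^\eta}$ and integrating over $[0,t]$ yields \eqref{F-Representation}.

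For positivity, note that $F^\eta(0)=\varepsilon\int u_0\varphi^\eta\geqslant0$ because $\lambda^\eta(0)=1$. Also $J^\eta(s)\geqslant\varepsilon C^\eta(u_0,u_1)>0$ by \eqref{Positive-Initial-Weighted-Constant} and $N^\eta\geqslant0$. Hence, for $t>0$, the integral term in \eqref{F-Representation} is strictly positive, so $F^\eta(t)>0$.

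For the upper estimate \eqref{F-Upper-Estimate}, I use the lower bound $\gamma^\eta\geqslant\frac{7\eta}{4}$ from Lemma~\ref{Lemma-Coefficient-Bounds}. This bounds the first exponential by $\mathrm{e}^{-\frac{7\eta}{4}t}$, using $F^\eta(0)\geqslant0$. For the second term, monotonicity of $J^\eta$ (since $(J^\eta)'=N^\eta\geqslant0$) gives $J^\eta(s)\leqslant J^\eta(t)$. Then
\begin{align*}
	\int_0^t\mathrm{e}^{-\frac{7\eta}{4}(t-s)}\dd s\leqslant\frac{4}{7\eta}.
\end{align*}

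The only delicate point is the regularity needed for the differentiation step: $u_t\in\ml{C}([0,T),L^2)$ and $\Psi^\eta$ is bounded on compact sets, so $F^\eta\in\ml{C}^1$ and the computation is legitimate. Everything else is direct.
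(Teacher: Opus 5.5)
Your proposal is correct and follows the paper's proof step by step. The steps are the product rule with $\Psi^\eta_t=r^\eta\Psi^\eta$, substitution of $G^\eta=J^\eta-\beta^\eta F^\eta$, the identity $\beta^\eta-r^\eta=b-2r^\eta=\gamma^\eta$, the integrating factor, and then the bound $\gamma^\eta\geqslant\frac{7\eta}{4}$ together with the monotonicity of $J^\eta$; one small precision is that $\Psi^\eta$ is only $\ml{C}^2$ in $t$ (not smooth, since $B$ and $a_\eta$ are merely $\ml{C}^2$), but $\ml{C}^1$ in time is all your differentiation step requires.
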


\begin{proof}
	The support condition and the regularity of $u$ justify the differentiation of $F^\eta$. Using $\Psi_t^\eta=r^\eta\Psi^\eta$, we obtain
	\begin{align*}
		(F^\eta)'(t)&=G^\eta(t)+r^\eta(t)F^\eta(t)\\
		&=J^\eta(t)-\big(\beta^\eta(t)-r^\eta(t)\big)F^\eta(t).
	\end{align*}
	Since $\beta^\eta-r^\eta=b-2r^\eta=\gamma^\eta$ by \eqref{Beta-Gamma-Definition}, this proves \eqref{F-Differential-Equation}. The integrating factor formula gives
	\eqref{F-Representation}. Moreover,
	\begin{align*}
		F^\eta(0)=\varepsilon\int_{\mb{R}^n}u_0(x)\varphi^\eta(x)\dd x\geqslant0,
	\end{align*}
	whereas $J^\eta(t)>0$. Hence, \eqref{F-Representation} implies $F^\eta(t)>0$ for every $t>0$. Because $J^\eta$ is nondecreasing and $\gamma^\eta\geqslant\frac{7\eta}{4}$ by \eqref{Beta-Gamma-Bounds}, we deduce that
	\begin{align*}
		F^\eta(t)&\leqslant\mathrm{e}^{-\frac{7\eta}{4}t}F^\eta(0)+J^\eta(t)\int_0^t\mathrm{e}^{-\frac{7\eta}{4}(t-s)}\dd s\\
		&\leqslant\mathrm{e}^{-\frac{7\eta}{4}t}F^\eta(0)+\frac{4}{7\eta}J^\eta(t),
	\end{align*}
	which proves \eqref{F-Upper-Estimate}.
\end{proof}

\begin{remark}\label{Remark-Exact-Advantage}
	If one employs an adjoint subsolution instead of an exact solution, the differential relation for $F^\eta$ contains an additional integral term generated by the residual of the adjoint equation. Its sign can be exploited only after the positivity of $F^\eta$ has been established, which leads to the first-zero argument used in \cite{Hamza=2026,Hamza=2026-2}. In the present setting, the exact adjoint equation \eqref{Exact-Adjoint-Equation} eliminates this residual term, and the positivity of $F^\eta$ follows directly from \eqref{F-Representation}.
\end{remark}

\subsection{Coercive lower bound for the second weighted functional}

The relation \eqref{G-Beta-F-J} and the upper bound \eqref{F-Upper-Estimate} yield a coercive lower bound for $G^\eta$ in terms of the nondecreasing functional $J^\eta$. This direct comparison avoids differentiating $G^\eta$.

\begin{prop}[Coercive lower bound for $G^\eta$]\label{Prop-G-Estimate}
	For every $t\in[1,T_\varepsilon)$, one has
	\begin{align}\label{G-Coercive-Estimate}
		G^\eta(t)\geqslant\frac{1}{4}J^\eta(t).
	\end{align}
	Consequently,
	\begin{align}\label{G-Positive-Lower}
		G^\eta(t)\geqslant\frac{\varepsilon}{4}C^\eta(u_0,u_1)>0\ \ \text{for every}\ \ t\in[1,T_\varepsilon).
	\end{align}
\end{prop}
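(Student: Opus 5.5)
From \eqref{G-Beta-F-J} we have $G^\eta(t)=J^\eta(t)-\beta^\eta(t)F^\eta(t)$, and the plan is simply to insert the upper bound \eqref{F-Upper-Estimate} for $F^\eta$, which is available because $F^\eta>0$ and $\beta^\eta>0$. By Lemma~\ref{Lemma-Coefficient-Bounds}, $\beta^\eta(t)\leqslant\frac{19\eta}{16}$. Hence
\begin{align*}
	G^\eta(t)\geqslant J^\eta(t)-\frac{19\eta}{16}\Big(\mathrm{e}^{-\frac{7\eta}{4}t}F^\eta(0)+\frac{4}{7\eta}J^\eta(t)\Big)
	=\Big(1-\frac{19}{28}\Big)J^\eta(t)-\frac{19\eta}{16}\mathrm{e}^{-\frac{7\eta}{4}t}F^\eta(0).
\end{align*}
Since $1-\frac{19}{28}=\frac{9}{28}$, we have
\begin{align*}
	G^\eta(t)\geqslant\frac{9}{28}J^\eta(t)-\frac{19\eta}{16}\mathrm{e}^{-\frac{7\eta}{4}t}F^\eta(0),
\end{align*}
and it remains to show that the exponentially small initial term is at most $\big(\frac{9}{28}-\frac14\big)J^\eta(t)=\frac{1}{14}J^\eta(t)$ for $t\geqslant1$.

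For this step I would use $J^\eta(t)\geqslant J^\eta(0)=\varepsilon C^\eta(u_0,u_1)$ together with
\begin{align*}
	C^\eta(u_0,u_1)\geqslant\beta^\eta(0)\int_{\mb{R}^n}u_0\varphi^\eta\dd x\geqslant\frac{13\eta}{16}\cdot\frac{F^\eta(0)}{\varepsilon},
\end{align*}
which holds because $u_1\geqslant0$ and $\beta^\eta(0)\geqslant\frac{13\eta}{16}$. Then
\begin{align*}
	\frac{19\eta}{16}\mathrm{e}^{-\frac{7\eta}{4}t}F^\eta(0)\leqslant\frac{19}{13}\mathrm{e}^{-\frac{7\eta}{4}t}J^\eta(t),
\end{align*}
so it suffices to have $\frac{19}{13}\mathrm{e}^{-\frac{7\eta}{4}t}\leqslant\frac{1}{14}$ for all $t\geqslant1$. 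Since $\eta\geqslant\eta_0\geqslant2$ and $t\geqslant1$, we get $\mathrm{e}^{-\frac{7\eta}{4}t}\leqslant\mathrm{e}^{-7/2}\approx0.030$, and $\frac{19}{13}\cdot0.030\approx0.044<\frac{1}{14}\approx0.071$. This gives \eqref{G-Coercive-Estimate}. If the data have $u_0\equiv0$, then $F^\eta(0)=0$ and the initial term vanishes, so the estimate holds trivially.

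Finally, \eqref{G-Positive-Lower} follows from \eqref{G-Coercive-Estimate} and $J^\eta(t)\geqslant\varepsilon C^\eta(u_0,u_1)>0$, using \eqref{Positive-Initial-Weighted-Constant}. The only delicate point is the numerical bookkeeping in absorbing the initial term; the threshold $t\geqslant1$ and $\eta\geqslant2$ supply just enough exponential decay. The constants are comfortable rather than tight, so I expect no genuine obstacle.
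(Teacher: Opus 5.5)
Your proposal is correct and follows the paper's proof essentially step for step. You insert the upper bound for $F^\eta$ together with $\beta^\eta\leqslant\frac{19\eta}{16}$ to reach $\frac{9}{28}J^\eta(t)-\frac{19\eta}{16}\mathrm{e}^{-\frac{7\eta}{4}t}F^\eta(0)$, then absorb the initial term using $F^\eta(0)\leqslant\frac{16}{13\eta}J^\eta(t)$ and $\mathrm{e}^{-\frac{7\eta}{4}t}\leqslant\mathrm{e}^{-7/2}$ for $t\geqslant1$, $\eta\geqslant2$; this is exactly the paper's route, and your separate treatment of $u_0\equiv0$ is harmless but unnecessary, since the bound on $F^\eta(0)$ already covers that case.
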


\begin{proof}
	By \eqref{G-Beta-F-J}, \eqref{F-Upper-Estimate} and \eqref{Beta-Gamma-Bounds}, we are able to derive
	\begin{align*}
		G^\eta(t)=J^\eta(t)-\beta^\eta(t)F^\eta(t)\geqslant\frac{9}{28}J^\eta(t)-\frac{19\eta}{16}\mathrm{e}^{-\frac{7\eta}{4}t}F^\eta(0).
	\end{align*}
	Moreover, by \eqref{Initial-Weighted-Constant} and \eqref{Beta-Gamma-Bounds}, one claims
	\begin{align*}
		C^\eta(u_0,u_1)\geqslant\beta^\eta(0)\int_{\mb{R}^n}u_0(x)\varphi^\eta(x)\dd x=\frac{\beta^\eta(0)}{\varepsilon}F^\eta(0)\geqslant\frac{13\eta}{16\varepsilon}F^\eta(0).
	\end{align*}
	Since $J^\eta(t)\geqslant\varepsilon C^\eta(u_0,u_1)$, it follows that
	\begin{align*}
		F^\eta(0)\leqslant\frac{16}{13\eta}J^\eta(t).
	\end{align*}
	Therefore,
	\begin{align*}
		G^\eta(t)\geqslant\left(\frac{9}{28}-\frac{19}{13}\mathrm{e}^{-\frac{7\eta}{4}t}\right)J^\eta(t).
	\end{align*}
	For $t\geqslant1$ and $\eta\geqslant2$, the elementary computation yields
	\begin{align*}
		\frac{9}{28}-\frac{19}{13}\mathrm{e}^{-\frac{7\eta}{4}t}\geqslant\frac{9}{28}-\frac{19}{13}\mathrm{e}^{-\frac{7}{2}}>\frac{1}{4}.
	\end{align*}
	This proves \eqref{G-Coercive-Estimate}. The lower bound \eqref{G-Positive-Lower} follows from $J^\eta(t)\geqslant\varepsilon C^\eta(u_0,u_1)$.
\end{proof}

\begin{remark}\label{Remark-Direct-G}
The estimate in Proposition~\ref{Prop-G-Estimate} follows directly from \eqref{G-Beta-F-J} and \eqref{F-Differential-Equation}, without differentiating $G^\eta$. Therefore, no derivatives of the coefficients are needed beyond those contained in the potential $V$.
\end{remark}

\section{Proof of the main results}\label{Section-Proof}

\subsection{Integral estimate for the adjoint test function}

We first estimate the weighted volume of the support of the solution.

\begin{lemma}[Weighted volume estimate]\label{Lemma-Weighted-Volume}
	For the adjoint test function $\Psi^\eta$ defined in \eqref{Adjoint-Test-Function}, there exists a constant $C_{\eta,R}>0$ such that
	\begin{align}\label{Weighted-Volume}
		\int_{|x|\leqslant R+t}\Psi^\eta(t,x)\dd x\leqslant C_{\eta,R}\,\mathrm{e}^{\frac{1}{2}B(t)}(1+t)^{\frac{n-1}{2}}
	\end{align}
	for every $t\geqslant0$.
\end{lemma}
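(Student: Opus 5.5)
Since $\Psi^\eta(t,x)=\lambda^\eta(t)\varphi^\eta(x)$, the integral factorizes as $\lambda^\eta(t)\int_{|x|\leqslant R+t}\varphi^\eta(x)\dd x$. We estimate the two factors separately and then multiply, so that the exponential growth of the spatial integral is cancelled by the factor $\mathrm{e}^{-\eta t}$ in $\lambda^\eta$.

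\emph{Temporal factor.} By \eqref{Temporal-Comparability}, which follows from the amplitude bounds \eqref{Amplitude-Bounds} since $a_\eta(t)/a_\eta(0)\leqslant\frac{32}{30}$, we have
\begin{align*}
\lambda^\eta(t)\leqslant \tfrac{16}{15}\,\mathrm{e}^{-\eta t+\frac12 B(t)}.
\end{align*}

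\emph{Spatial factor.} For $n\geqslant2$ we use the pointwise bound \eqref{Spatial-Pointwise-Estimate} in polar coordinates:
\begin{align*}
\int_{|x|\leqslant R+t}\varphi^\eta(x)\dd x\lesssim_\eta\int_0^{R+t}\mathrm{e}^{\eta\rho}(1+\rho)^{-\frac{n-1}{2}}\rho^{n-1}\dd\rho\leqslant\int_0^{R+t}\mathrm{e}^{\eta\rho}(1+\rho)^{\frac{n-1}{2}}\dd\rho.
\end{align*}
The weight $(1+\rho)^{\frac{n-1}{2}}$ is nondecreasing, so we pull it out at $\rho=R+t$. This leaves
\begin{align*}
(1+R+t)^{\frac{n-1}{2}}\int_0^{R+t}\mathrm{e}^{\eta\rho}\dd\rho\leqslant\eta^{-1}(1+R+t)^{\frac{n-1}{2}}\mathrm{e}^{\eta(R+t)}.
\end{align*}
For $n=1$, integrating $\mathrm{e}^{\eta x}+\mathrm{e}^{-\eta x}$ over $[-R-t,R+t]$ gives at most $\frac{2}{\eta}\mathrm{e}^{\eta(R+t)}$, which is the same bound with exponent $\frac{n-1}{2}=0$.

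\emph{Combination.} Multiplying the two bounds, the factors $\mathrm{e}^{-\eta t}$ and $\mathrm{e}^{\eta t}$ cancel, and we get
\begin{align*}
\int_{|x|\leqslant R+t}\Psi^\eta(t,x)\dd x\lesssim_{\eta}\mathrm{e}^{\eta R}\,\mathrm{e}^{\frac12B(t)}(1+R+t)^{\frac{n-1}{2}}.
\end{align*}
Since $(1+R+t)\leqslant(1+R)(1+t)$, this is \eqref{Weighted-Volume} with a constant $C_{\eta,R}$ that is independent of $t$.

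The only point needing care is the step relying on \eqref{Spatial-Pointwise-Estimate}: the decay $(1+|x|)^{-\frac{n-1}{2}}$ must be retained there. It is what turns the surface factor $\rho^{n-1}$ into the sharp power $(1+t)^{\frac{n-1}{2}}$ rather than $(1+t)^{n-1}$. Everything else is direct.
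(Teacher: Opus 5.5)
Your proposal is correct and follows the paper's own proof. Both factor out $\lambda^\eta(t)\approx \mathrm{e}^{-\eta t+\frac12B(t)}$, bound $\int_{|x|\leqslant R+t}\varphi^\eta\,\mathrm{d}x$ by $C_{\eta,R}\,\mathrm{e}^{\eta t}(1+t)^{\frac{n-1}{2}}$ (directly for $n=1$, via \eqref{Spatial-Pointwise-Estimate} and polar coordinates for $n\geqslant2$), and multiply. You simply make the constants more explicit, namely $16/15$, $\eta^{-1}\mathrm{e}^{\eta R}$ and $(1+R)^{\frac{n-1}{2}}$.
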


\begin{proof}
	By \eqref{Temporal-Comparability}, one may deduce
	\begin{align*}
		\int_{|x|\leqslant R+t}\Psi^\eta(t,x)\dd x\lesssim\mathrm{e}^{-\eta t+\frac{1}{2}B(t)}\int_{|x|\leqslant R+t}\varphi^\eta(x)\dd x.
	\end{align*}
	For $n=1$, it follows directly from \eqref{Spatial-Test-Function} that
	\begin{align*}
		\int_{|x|\leqslant R+t}\varphi^\eta(x)\dd x\lesssim\mathrm{e}^{\eta(R+t)}.
	\end{align*}
	For $n\geqslant2$, \eqref{Spatial-Pointwise-Estimate} and polar coordinates give
	\begin{align*}
		\int_{|x|\leqslant R+t}\varphi^\eta(x)\dd x&\lesssim_\eta\int_0^{R+t}\mathrm{e}^{\eta r}(1+r)^{-\frac{n-1}{2}}r^{n-1}\dd r\\
		&\lesssim_\eta\int_0^{R+t}\mathrm{e}^{\eta r}(1+r)^{\frac{n-1}{2}}\dd r\\
		&\lesssim_{\eta,R}\mathrm{e}^{\eta t}(1+t)^{\frac{n-1}{2}}.
	\end{align*}
	Thus, for every $n\geqslant1$,
	\begin{align*}
		\int_{|x|\leqslant R+t}\varphi^\eta(x)\dd x\leqslant C_{\eta,R}\,\mathrm{e}^{\eta t}(1+t)^{\frac{n-1}{2}}.
	\end{align*}
	Combining the last estimate with \eqref{Temporal-Comparability} proves \eqref{Weighted-Volume}.
\end{proof}

\subsection{The nonlinear differential inequality}

To convert the coercive estimate for $G^\eta$ into a scalar differential inequality, for $t\geqslant1$, define
\begin{align}\label{Comparison-Functional}
	L^\eta(t):=\frac{1}{4}\left(\int_1^tN^\eta(s)\dd s+\varepsilon C^\eta(u_0,u_1)\right).
\end{align}
It follows directly from the definition that
\begin{align}\label{L-Derivative}
	(L^\eta)'(t)=\frac{1}{4}N^\eta(t)
\end{align}
for almost every $t\in(1,T_\varepsilon)$. Moreover, thanks to $L^\eta(t)\leqslant\frac{1}{4}J^\eta(t)$, \cref{Prop-G-Estimate} yields
\begin{align}\label{G-Dominates-L}
	G^\eta(t)\geqslant L^\eta(t)>0\ \ \text{for every}\ \ t\in[1,T_\varepsilon).
\end{align}

For almost every $t\in(1,T_\varepsilon)$, the positivity in \eqref{G-Dominates-L}, H\"older's inequality, and \eqref{Support-Time-Derivative} give
\begin{align*}
	G^\eta(t)&\leqslant\int_{\mb{R}^n}|u_t(t,x)|\Psi^\eta(t,x)\dd x\notag\\
	&\leqslant\big(N^\eta(t)\big)^{\frac{1}{p}}\left(\int_{|x|\leqslant R+t}\Psi^\eta(t,x)\dd x\right)^{\frac{1}{p'}}.
\end{align*}
It follows from \cref{Lemma-Weighted-Volume} that
\begin{align}\label{Nonlinearity-Lower-G}
	N^\eta(t)&\geqslant\big(G^\eta(t)\big)^p\left(\int_{|x|\leqslant R+t}\Psi^\eta(t,x)\dd x\right)^{1-p}\notag\\
	&\geqslant C\big(G^\eta(t)\big)^p\mathrm{e}^{-\frac{p-1}{2}B(t)}(1+t)^{-\frac{(n-1)(p-1)}{2}}.
\end{align}
Combining \eqref{G-Dominates-L}, \eqref{L-Derivative}, and \eqref{Nonlinearity-Lower-G}, we obtain the scalar differential inequality that drives the blow-up argument.

\begin{prop}[Nonlinear differential inequality]\label{Prop-Nonlinear-ODE}
	For almost every $t\in(1,T_\varepsilon)$, one has
	\begin{align}\label{Nonlinear-ODE}
		(L^\eta)'(t)\geqslant C\big(L^\eta(t)\big)^p\mathrm{e}^{-\frac{p-1}{2}B(t)}(1+t)^{-\frac{(n-1)(p-1)}{2}}.
	\end{align}
	The constant $C>0$ is independent of $\varepsilon$ and $t$.
\end{prop}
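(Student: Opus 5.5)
The inequality \eqref{Nonlinear-ODE} follows by chaining the three facts collected just before the statement: the derivative identity \eqref{L-Derivative}, the comparison \eqref{G-Dominates-L}, and the Hölder-type lower bound \eqref{Nonlinearity-Lower-G}. Fix $t\in(1,T_\varepsilon)$ outside the null set where \eqref{L-Derivative} or \eqref{Nonlinearity-Lower-G} fails; the union of these two exceptional sets is still a null set, so the conclusion holds almost everywhere.

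First, by \eqref{L-Derivative}, $(L^\eta)'(t)=\frac14N^\eta(t)$. Next, apply \eqref{Nonlinearity-Lower-G}:
\begin{align*}
	N^\eta(t)\geqslant C\big(G^\eta(t)\big)^p\mathrm{e}^{-\frac{p-1}{2}B(t)}(1+t)^{-\frac{(n-1)(p-1)}{2}}.
\end{align*}
Here the constant comes from $C_{\eta,R}^{1-p}$ in Lemma~\ref{Lemma-Weighted-Volume}. It depends on $\eta$, $R$, $n$, $p$ and the coefficients, but not on $\varepsilon$ or $t$, because $\eta\geqslant\eta_0$ is fixed once and for all by \eqref{Eta-Threshold}.

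Then use \eqref{G-Dominates-L}, namely $G^\eta(t)\geqslant L^\eta(t)>0$. Since $s\mapsto s^p$ is increasing on $\Rplus$, this gives $(G^\eta(t))^p\geqslant(L^\eta(t))^p$. The positivity of $L^\eta$ is exactly what makes this monotonicity step legitimate. The positivity of $G^\eta$ has already been used inside \eqref{Nonlinearity-Lower-G}, where $G^\eta$ was bounded by $\int|u_t|\Psi^\eta$ and Hölder was applied to a nonnegative quantity. Combining the three displays gives \eqref{Nonlinear-ODE} with constant $C/4$.

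The only point requiring care is confirming that the constant is independent of $\varepsilon$. The dependence on the initial data enters only through $L^\eta$ itself, via $\varepsilon C^\eta(u_0,u_1)$, and not through the constant. Likewise, the comparability constants in \eqref{Temporal-Comparability} come from \eqref{Amplitude-Bounds} and are absolute once $\eta$ is fixed. There is no genuine obstacle here; the statement is a bookkeeping consequence of the preceding estimates.
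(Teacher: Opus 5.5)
Your proposal is correct and matches the paper's argument: the paper obtains \eqref{Nonlinear-ODE} by chaining \eqref{L-Derivative}, \eqref{G-Dominates-L} and \eqref{Nonlinearity-Lower-G}, with the $\varepsilon$-independent constant coming from Lemma~\ref{Lemma-Weighted-Volume} once $\eta\geqslant\eta_0$ is fixed. Your remarks on the null set and on where the positivity of $G^\eta$ and $L^\eta$ is used are accurate bookkeeping.
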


\subsection{Proof of the general blow-up criterion}

\begin{proof}[Proof of Theorem~\ref{Thm-General-Criterion}]
	Fix $\eta\geqslant\eta_0$ once and for all. Since $\ml{H}_p^{-1}(y)\geqslant1$ for every $y\geqslant0$, \eqref{General-Lifespan-Bound} is immediate when $T_\varepsilon\leqslant1$. Hence, we may assume that $T_\varepsilon>1$. Since $L^\eta(t)>0$, \eqref{Nonlinear-ODE} implies
	\begin{align*}
		\frac{\dd}{\dd t}[L^\eta(t)]^{-(p-1)}\leqslant-C(p-1)\,\mathrm{e}^{-\frac{p-1}{2}B(t)}(1+t)^{-\frac{(n-1)(p-1)}{2}}
	\end{align*}
	for almost every $t\in(1,T_\varepsilon)$. Integrating over $[1,t]$, we obtain
	\begin{align*}
		[L^\eta(t)]^{-(p-1)}\leqslant [L^\eta(1)]^{-(p-1)} -C(p-1)\ml{H}_p(t).
	\end{align*}
	Moreover, \eqref{Comparison-Functional} gives $L^\eta(1)=\frac{\varepsilon}{4}C^\eta(u_0,u_1)$. 
	Since $C^\eta(u_0,u_1)>0$ is independent of $\varepsilon$, the positivity of the left-hand side yields the necessary condition
	\begin{align}\label{Necessary-H-Condition}
		\ml{H}_p(t)\leqslant C\varepsilon^{-(p-1)}\ \ \text{for every}\ \ t\in[1,T_\varepsilon).
	\end{align}
Suppose, by contradiction, that $T_\varepsilon=\infty$. Then \eqref{Necessary-H-Condition} holds for every $t\geqslant1$, which contradicts \eqref{Divergence-Condition}. Therefore, $T_\varepsilon<\infty$.

It remains to prove \eqref{General-Lifespan-Bound}. Letting $t\uparrow T_\varepsilon$ in \eqref{Necessary-H-Condition}, we obtain
\begin{align*}
	\ml{H}_p(T_\varepsilon)\leqslant C\varepsilon^{-(p-1)}.
\end{align*}
Applying the increasing function $\ml{H}_p^{-1}$ to both sides gives
\begin{align*}
	T_\varepsilon\leqslant\ml{H}_p^{-1}\big(C\varepsilon^{-(p-1)}\big),
\end{align*}
which proves \eqref{General-Lifespan-Bound} and completes the proof.
\end{proof}

\subsection{Explicit subcritical and critical estimates}

\begin{proof}[Proof of Corollary~\ref{Coro-Log-Damping}]
	The desired estimates are immediate when $T_\varepsilon\leqslant1$ after enlarging the constants. We may therefore assume that $T_\varepsilon>1$. It follows from \eqref{Log-Damping-Assumption} and the definition of $\ml{H}_p$ that
	\begin{align}\label{H-Power-Lower}
		\ml{H}_p(t)\geqslant C\int_1^t(1+s)^{-\alpha}\dd s\ \ \text{with}\ \ \alpha:=\frac{(n+\mu-1)(p-1)}{2}.
	\end{align}
	 Let first $n+\mu>1$ and $1<p<p_{\mathrm{Gla}}(n+\mu)$. In this case, $\alpha<1$, and
	\begin{align*}
		\ml{H}_p(t)\geqslant C\big((1+t)^{1-\alpha}-1\big).
	\end{align*}
	By Theorem~\ref{Thm-General-Criterion} and \eqref{Necessary-H-Condition}, we know
	\begin{align*}
		(1+T_\varepsilon)^{1-\alpha}\leqslant C\big(1+\varepsilon^{-(p-1)}\big)\leqslant C\varepsilon^{-(p-1)}
	\end{align*}
	for every sufficiently small $\varepsilon>0$. According to the following identity:
	\begin{align*}
		\frac{p-1}{1-\alpha}=\frac{2(p-1)}{2-(n+\mu-1)(p-1)},
	\end{align*}
	the subcritical estimate in \eqref{Explicit-Lifespan-Bounds} follows. If $p=p_{\mathrm{Gla}}(n+\mu)$, then $\alpha=1$, and \eqref{H-Power-Lower} gives
	\begin{align*}
		\ml{H}_p(t) \geqslant C\log\left(\frac{1+t}{2}\right).
	\end{align*}
	Therefore,
	\begin{align*}
		\log\left(\frac{1+T_\varepsilon}{2}\right)\leqslant C\varepsilon^{-(p-1)},
	\end{align*}
	which yields the critical estimate in \eqref{Explicit-Lifespan-Bounds}. Finally, if $n=1$ and $\mu=0$, then \eqref{H-Power-Lower} becomes
	\begin{align*}
		\ml{H}_p(t)\geqslant C(t-1).
	\end{align*}
	It leads to
	\begin{align*}
		T_\varepsilon\leqslant1+C\varepsilon^{-(p-1)}\leqslant C\varepsilon^{-(p-1)}
	\end{align*}
	for every sufficiently small $\varepsilon>0$. This proves \eqref{One-Dimensional-Zero-Shift}.
\end{proof}

\begin{remark}\label{Remark-Critical-Mechanism}
	At $p=p_{\mathrm{Gla}}(n+\mu)$, the assumption \eqref{Log-Damping-Assumption} provides the lower bound
	\begin{align*}
		\mathrm{e}^{-\frac{p-1}{2}B(t)}(1+t)^{-\frac{(n-1)(p-1)}{2}}\geqslant C(1+t)^{-1}.
	\end{align*}
	The logarithmic divergence of this lower bound is sufficient to cover the critical endpoint. No logarithmic modification of the adjoint profile is required, since the Volterra equation constructs an exact
	profile comparable to $\mathrm{e}^{-\eta t+\frac{1}{2}B(t)}$. For the scale-invariant coefficients, the time weight is exactly $(1+t)^{-1}$, while the leading order of the temporal adjoint profile is $(1+t)^{\frac{\mu}{2}}\mathrm{e}^{-\eta t}$.
\end{remark}

\begin{proof}[Proof of Corollary~\ref{Coro-Scale-Invariant}]
	The computation in \cref{Example-Scale-Invariant} shows that all the assumptions of Corollary~\ref{Coro-Log-Damping} are satisfied with $C_B=0$, independently of the sign of $\delta$. The conclusion follows directly.
\end{proof}

\section{Concluding remarks and open problems}
\label{Sec-Concluding-Remarks}

In this paper, the quantities $B$ and $V$ enter the proof in different ways. The integrability of $V$ allows us to construct a positive adjoint solution, while the growth of $B$ determines the time weight in the blow-up estimate. Hence, the argument also applies when the damping or mass coefficient is non-integrable or nondecaying, provided that their combined contribution to $V$ remains integrable.

A natural question is whether the divergence condition for $\ml{H}_p$ in Theorem~\ref{Thm-General-Criterion} is sharp under
suitable assumptions on the associated linear evolution. The logarithmically corrected model in Example~\ref{Example-Logarithmic-Correction} is particularly relevant. At the shifted Glassey exponent, the present criterion detects a transition at
\begin{align*}
	\kappa=n+\mu-1,
\end{align*}
including a double exponential lifespan upper bound at the borderline. It remains open whether this transition describes the actual nonlinear threshold or only a limitation of the present test function method.

Another question concerns the optimality of the short-range condition
\begin{align*}
	V\in L^1(\Rplusast).
\end{align*}
This assumption allows us to construct a bounded positive amplitude satisfying
\begin{align*}
	a_\eta(t)\to1
	\quad\text{and}\quad
	a_\eta'(t)\to0
	\ \ \text{as}\ \ t\to\infty.
\end{align*}
A basic borderline long-range model lying beyond the present framework is
\begin{align*}
	V(t)=\frac{q_1}{1+t}
	\ \ \text{with}\ \ 
	q_1\in\mb{R}\setminus\{0\}.
\end{align*}
We emphasize that this decay is long-range rather than scale-invariant, since the scale-invariant order for the effective potential is $(1+t)^{-2}$. For this model, the Volterra construction posed from infinity with the normalization $a_\eta(t)\to1$ is no longer directly applicable. A formal dominant-balance argument suggests instead that
\begin{align*}
	a_\eta(t)\sim(1+t)^{\frac{q_1}{2\eta}}
	\ \ \text{as}\ \ t\to\infty.
\end{align*}
It remains open whether such a correction changes the sharp blow-up threshold or the lifespan asymptotics. We plan to investigate the precise influence of $q_1$ on the blow-up range in future work.

\section*{Acknowledgments}
Wenhui Chen is supported in part by the National Natural Science Foundation of China (grant No. 12301270) and the Guangdong Basic and Applied Basic Research Foundation (grant No. 2025A1515010240).

\end{document}